\newcommand{\N}{\mathcal{N}_1}
\newcommand{\Supp}{\operatorname{Supp}}
\newcommand{\sing}{\operatorname{sing}}
\newcommand{\NE}{\operatorname{NE}}
\newcommand{\Exc}{\operatorname{Exc}}
\newcommand{\Locus}{\operatorname{Locus}}
\newcommand{\codim}{\operatorname{codim}}
\newcommand{\reg}{\operatorname{reg}}
\newcommand{\Hilb}{\operatorname{Hilb}}
\newcommand{\red}{\operatorname{red}}
\newcommand{\Spec}{\operatorname{Spec}}
\newtheorem{thm}{Theorem}[section]
\newtheorem{lem}[thm]{Lemma}
\newtheorem{cor}[thm]{Corollary}
\newtheorem{pro}[thm]{Proposition}
\theoremstyle{definition}
\newtheorem{nr}[thm]{}
\newtheorem{rmk}[thm]{Remark}
\newtheorem{ex}[thm]{Example}
\newtheorem{defi}[thm]{Definition}
\numberwithin{equation}{section}
\begin{document}
\title{On the Picard number of singular Fano varieties}
\author{Gloria Della Noce}
\address{Dipartimento di Matematica \\ Universit\`a  di Pavia \\ via Ferrata, 1 27100 Pavia - Italy}
\email{gloria.dellanoce@unipv.it}

\date{February 6, 2012}

\begin{abstract}
Let $X$ be a $\mathbb{Q}$-factorial Gorenstein Fano variety. Suppose that the singularities of $X$ are canonical and that the locus where they are non-terminal has dimension zero. Let $D \subset X$ be a prime divisor. We show that $\rho_X - \rho_D \leq 8$. Moreover, if $\rho_X - \rho_D \geq 4$, there exists a finite morphism $\pi:X \to S \times Y$, where $S$ is a surface with $\rho_S \leq 9$.

\noindent As an application we prove that, if $\dim(X)=3$, then $\rho_X \leq 10$.

\end{abstract}

\maketitle

\bibliographystyle{amsalpha}

\section*{Introduction}

Let $X$ be a (possibly singular) Fano variety, \textit{i.e.} a normal variety whose anticanonical divisor has a multiple which is Cartier and ample. Let $D \subset X$ be a prime divisor. We denote by $\N(X)$ (resp. $\N(D)$) the vector space of real one-cycles in $X$ (resp. in $D$), modulo numerical equivalence. By definition, $\dim \N(X)=\rho_X$ is the Picard number of $X$, and similarly for $D$. The inclusion $i:D \hookrightarrow X$ induces a linear map $i_*:\N(D)\to \N(X)$; let us define
\[
 \N(D,X)=i_*\N(D) \subseteq \N(X).
\]
Thus $\N(D,X)$ is the subvector space of $\N(X)$ whose elements are the numerical equivalence classes of one-cycles contained in $D$. Notice that the dimension of this space could be strictly smaller that the Picard number of $D$, because $i_*$ does not need to be injective.

In this paper we are interested in finding an upper bound, non depending on $D$, for the codimension of $\N(D,X)$ in $\N(X)$. We then show how, under additional assumptions, the knowledge of this bound gives us information on the geometry of $X$ and its Picard number.

This problem was first introduced by C. Casagrande in \cite{cas10}, where the author studied the smooth case. Her main result is the following:

\begin{thm}\label{cinzia}\cite[Theorem 1.1]{cas10}
Let $X$ be a Fano manifold. 
For every prime divisor $D\subset X$,  we have 
\[
\rho_X-\rho_D\leq \codim\N(D,X)\leq 8.
\]
Moreover, suppose that there exists a prime divisor $D$ with
$\codim\N(D,X)\geq 4$. Then $X\cong
S\times Y$, where $S$ is a Del Pezzo surface
with $\rho_S\geq\codim\N(D,X)+1$, 
 and $D$ dominates $Y$ under the projection.
\end{thm}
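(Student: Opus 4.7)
The inequality $\rho_X - \rho_D \leq \codim\N(D,X)$ is formal: the inclusion induces $i_* : \N(D) \to \N(X)$ with image $\N(D,X)$, so $\rho_D = \dim\N(D) \geq \dim\N(D,X) = \rho_X - \codim\N(D,X)$. For the substantive bound, set $c := \codim\N(D,X)$ and use that $\NE(X)$ is rational polyhedral and spans $\N(X)$ (Cone Theorem, since $X$ is a smooth Fano) to choose extremal rays $R_1,\dots,R_c$ of $\NE(X)$ whose classes form a basis of $\N(X)/\N(D,X)$.

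I would first argue that each $R_i$ is of fiber type. If the contraction $\varphi_{R_i}$ were birational with exceptional divisor $E_i$, then either $E_i=D$, immediately giving $[R_i]\in\N(D,X)$, or $E_i\neq D$ and the analysis of $D\cap E_i$ (using that curves in $R_i$ sweep out $E_i$) again forces $[R_i]\in\N(D,X)$, a contradiction. Let $\varphi_i:X\to Y_i$ be the resulting fibration. Then $\varphi_i(D)=Y_i$ (otherwise $D$ is $\varphi_i$-vertical and $[R_i]\in\N(D,X)$), and Wi\'sniewski's inequality $\dim F_i+\dim\Locus(R_i)\geq\dim X+\ell(R_i)-1$ controls the dimension of the general fiber $F_i$. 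Replacing $(X,D)$ by $(F_i, D\cap F_i)$ for a general $F_i$ yields a smooth Fano of strictly smaller dimension for which one verifies that $\codim\N(D\cap F_i,F_i)$ drops by at most one. Induction on $\dim X$ reduces the bound to $\dim X=2$, where the Del Pezzo classification gives $\rho_X\leq 9$ and hence $c\leq 8$.

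For the structure theorem with $c\geq 4$, the preceding analysis singles out a fiber-type contraction $\pi:X\to Y$ whose general fiber is a Del Pezzo surface $S$ with $\rho_S\geq c+1\geq 5$, and by construction $D$ dominates $Y$. The main obstacle is upgrading this Del Pezzo fibration to a global product decomposition $X\cong S\times Y$: one must produce a second projection $X\to S$ splitting $\pi$. My approach would be to use the extremal rays of $\NE(X)$ whose classes fall in $\pi_*^{-1}(0)$, i.e.\ the vertical rays, together with the horizontal rays among $R_1,\dots,R_c$, and assemble them via Stein factorization and universal properties of Mori contractions into a contraction $X\to S$, provided the family $\{S_y\}_{y\in Y}$ is isotrivial. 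Isotriviality should follow from the rigidity of high-Picard-rank Del Pezzo surfaces (their moduli being discrete) combined with the existence of enough sections of $\pi$ extracted from the complementary extremal rays. The delicate point — and the main obstacle — is precisely this rigidity-plus-section argument: one has to rule out any twisting or monodromy in the Del Pezzo family, and show that the horizontal extremal rays descend coherently to a globally defined morphism $X\to S$ rather than merely a rational map.
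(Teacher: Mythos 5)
Your formal inequality $\rho_X-\rho_D\leq\codim\N(D,X)$ is fine, but the core of your argument for the bound $\leq 8$ rests on a false step: an extremal ray $R$ with $R\nsubseteq\N(D,X)$ need \emph{not} be of fiber type. If $\varphi_R$ is divisorial with exceptional divisor $E\neq D$, the set $D\cap E$ is in general only a multisection of $\varphi_{R|E}$, so knowing that curves of $R$ sweep out $E$ and that $D$ meets $E$ does not place $[R]$ in $\N(D,X)$. Concretely, take $X=S\times\mathbb{P}^1$ with $S$ a degree-one Del Pezzo surface and $D=C\times\mathbb{P}^1$ for a $(-1)$-curve $C$: then $\codim\N(D,X)=8$, yet all but one of the $240$ rays $\mathbb{R}_{\geq0}[C'\times\{pt\}]$ are divisorial and lie outside $\N(D,X)$. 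The argument this paper follows (Casagrande's) goes the opposite way: running a special Mori program for $-D$, the steps whose rays are not in $\N(D_i,X_i)$ are shown (Lemma \ref{teomp2}, via Theorem \ref{teo} and Lemma \ref{ishii}) to be \emph{divisorial} of type $(n-1,n-2)^{eq}$, producing $s\in\{c,c-1\}$ pairwise disjoint divisors $E_1,\dots,E_s$; only after contracting all of them and performing one further fiber-type contraction does one obtain $\psi:X\to Y$ whose general fiber is a Del Pezzo surface $F$ with $\rho_F\geq\dim\N(F,X)=c+1$, whence $c\leq 8$. Your inductive restriction to a general fiber also leaves unaddressed the comparison between $\codim\N(D\cap F_i,F_i)$, computed inside $\N(F_i)$, and $\codim\N(D,X)$, computed inside $\N(X)$; the map $\N(F_i)\to\N(X)$ is neither injective nor surjective in general.

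For the product decomposition your key claim is also false: Del Pezzo surfaces with $\rho_S\geq 6$ (degree $\leq 4$) have positive-dimensional moduli, so ``rigidity plus sections'' cannot yield isotriviality once $\codim\N(D,X)\geq 5$. The mechanism actually used is different: each exceptional divisor $E_i$ carries, via the Hilbert scheme and Ando's description of the contraction (Lemma \ref{lem_ando}), a $\mathbb{P}^1$-bundle structure after normalization and base change, and the three pairwise disjoint multisections of the form $E_i\cap E_j$ trivialize that bundle (Remark \ref{trivial}, Proposition \ref{prop}(6)). The second projection is then the contraction $\xi$ defined by the explicit nef divisor $2E_0+\sum_iE_i$, and $\pi=(\xi,\psi)$ is shown to be finite (an isomorphism in the smooth case) by a direct computation with the classes $[f_i]$ and the subspace $L=\bigcap_iE_i^{\perp}$. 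Both halves of your outline would need to be rebuilt around the divisorial, not fiber-type, geometry of the rays outside $\N(D,X)$.
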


In this paper we study what happens if $X$ is allowed to have mild singularities. Our approach is the same of Casagrande's paper and our main result is:

\begin{thm}\label{teo_princ}
Let $X$ be a $\mathbb{Q}$-factorial Gorenstein Fano variety of dimension $n$, with canonical singularities, and with at most finitely many non-terminal points. Then for every prime divisor $D \subset X$
\[
 \rho_X-\rho_D\leq \codim\N(D,X)\leq 8.
\]
Furthermore, if there exists a prime divisor $D \subset X$ such that $\codim \N(D,X)\geq 4$, there is a finite morphism
\[
\pi: X \rightarrow S \times Y,
\]
where $Y$ is a normal variety of dimension $n-2$ with rational singularities, and $S$ is a normal surface with rational quotient singularities, such that $9 \geq \rho_S \geq \codim \N(D,X) + 1$. Moreover, $\rho_X=\rho_S + \rho_Y$.

\end{thm}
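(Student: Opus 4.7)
The plan is to follow the strategy of Casagrande's proof of Theorem \ref{cinzia}, adapting each step to the mildly singular setting provided by the $\mathbb{Q}$-factorial Gorenstein canonical hypothesis together with a zero-dimensional non-terminal locus. The first inequality $\rho_X-\rho_D \leq \codim\N(D,X)$ is immediate from linear algebra: since $\rho_D \geq \dim i_*\N(D) = \rho_X-\codim\N(D,X)$, it is already obtained. All subsequent work goes into bounding $\codim\N(D,X)$ by $8$ and, in the large-codimension regime, into exhibiting the finite morphism to a product.

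For the bound $\codim\N(D,X)\leq 8$, I would invoke the Mori-theoretic toolkit available under our hypotheses: $X$ being $\mathbb{Q}$-factorial Gorenstein with canonical singularities is klt, so the cone, contraction, and base-point-free theorems apply, together with the existence and termination of $K_X$-flips in the relevant dimensions. Following Casagrande, I would isolate extremal rays $R\subset\NE(X)$ whose numerical classes fall outside $\N(D,X)$ and perform successive contractions and flips that reduce the Picard number while tracking the codimension of the analogue of $\N(D,X)$ on each intermediate model. The assumption that the non-terminal locus is zero-dimensional enters to guarantee that covering families of minimal rational curves of an extremal ray avoid the bad points, so that the local analysis near a general such curve matches the Gorenstein terminal setting. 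The process should terminate with a Mori fiber space $X'\to S$ in which $S$ is a normal surface; the classification of $\mathbb{Q}$-factorial log del Pezzo surfaces then yields $\rho_S\leq 9$ and forces $\codim\N(D,X)\leq 8$.

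To construct the finite morphism when $\codim\N(D,X)\geq 4$, I would supplement the fibration onto $S$ coming from the MMP above with a second fibration $X\to Y$ built from contractions of rays whose classes lie inside $\N(D,X)$, and combine them into $\pi\colon X\to S\times Y$. Its finiteness is expected to follow from the proper intersection of the fibers of the two fibrations with $D$; normality of $Y$ and its rational singularities come from standard results on contractions of klt varieties, while the quotient singularities on $S$ come from the classification of bases of Mori fiber spaces in dimension two with rational singularities. The identity $\rho_X=\rho_S+\rho_Y$ is then a formal consequence of $\pi$ being finite onto a product and the compatibility of Picard numbers under pullback. The principal obstacle I foresee is the preservation of the ``finitely many non-terminal points'' hypothesis under MMP operations: flips and divisorial contractions can in principle introduce positive-dimensional non-terminal loci in the output, so a substantial portion of the work will consist either in proving that this hypothesis persists through the specific contractions used, or in restructuring the argument so that the required extremal contractions are constructed directly on $X$ rather than on a priori worsened birational models.
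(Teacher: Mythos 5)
Your outline correctly reproduces the first (trivial) inequality and the general architecture borrowed from Casagrande --- run a special Mori program for $-D$, track the rays outside $\N(D,X)$, and combine two transverse fibrations into a finite map to a product --- but two of your load-bearing steps are wrong or unsupported. The most serious is your derivation of $\codim\N(D,X)\leq 8$: you propose to end the MMP with a Mori fiber space over a normal surface $S$ and to conclude $\rho_S\leq 9$ from ``the classification of $\mathbb{Q}$-factorial log del Pezzo surfaces.'' That classification gives no such bound: for log-terminal non-canonical Del Pezzo surfaces the Picard number is bounded only in terms of the index (Nikulin), and Example \ref{esempio} of this very paper exhibits a log del Pezzo surface of index $15$ with $\rho=10$. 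The bound $8$ actually comes from a different place: the composed contraction $\psi=\varphi\circ\sigma:X\to Y$ has general fiber $F$ a surface with $\dim\N(F,X)\geq c_X+1$, and $F$ is \emph{smooth} --- hence a genuine Del Pezzo surface with $\rho_F\leq 9$ --- precisely because the hypotheses (Gorenstein, canonical, finitely many non-terminal points) force $\dim(X_{\sing})\leq n-3$ via \cite[Corollary 5.18]{km}. The surface $S$ then inherits $\rho_S\leq 9$ because it is dominated by $F$, not because of any classification of singular surfaces. Your proposal never isolates this smoothness-of-the-general-fiber mechanism, which is where the hypothesis on the non-terminal locus does its essential work for the numerical bound.

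The second gap is the one you partially flag yourself but do not resolve: the steps of the Mori program corresponding to rays $Q_i\nsubseteq\N(D_i,X_i)$ must be shown to be \emph{divisorial} contractions with fibers of dimension $\leq 1$ (never flips), so that their exceptional divisors transform back to pairwise disjoint prime divisors $E_1,\dots,E_s$ living on $X$ itself, each carrying (after normalization and base change) a trivial $\mathbb{P}^1$-bundle structure. This is the content of Theorem \ref{teo} and Lemma \ref{teomp2}, proved via the existence of flips, a discrepancy computation on a common resolution, and a crepant $\mathbb{Q}$-factorial terminalization; Example \ref{Ex} shows the statement genuinely fails for arbitrary canonical singularities, so it cannot be waved through. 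Without these divisors one cannot define the subspace $L=\N(D\cap E_i,X)$ of codimension $c_X+1$, nor the nef divisors $M=2E_0+\sum E_i$ (resp.\ $M'$) whose contractions produce $S$; and the identity $\rho_X=\rho_S+\rho_Y$ is not a formal consequence of finiteness of $\pi$ (which only gives $\rho_X\geq\rho_S+\rho_Y$) but requires the dimension count $\dim\ker\xi_*\leq\dim L=\rho_X-c_X-1$ coming from this construction. Finally, the paper must split into two cases according to whether the terminal fiber-type contraction is finite on $D_0$ (the second case being a generalized conic bundle requiring the auxiliary rays $\hat R_i$ and Lemma \ref{ciao}); your outline does not anticipate this dichotomy.
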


The most important consequence of Theorem \ref{teo_princ} concerns the case of dimension $3$, where we find an explicit bound for the Picard number of $X$:

\begin{thm}\label{dim3} Let $X$ be a three-dimensional $\mathbb{Q}$-factorial Gorenstein Fano variety whose singularities are canonical and isolated. Then $\rho_X \leq 10$.
\end{thm}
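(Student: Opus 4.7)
The plan is to split into two cases according to Theorem \ref{teo_princ}. Since $\dim X = 3$, the second conclusion of that theorem involves a $Y$ of dimension $n-2 = 1$; as a normal one-dimensional variety is a smooth irreducible curve, $\rho_Y = 1$.

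First I would consider the case in which some prime divisor $D \subset X$ satisfies $\codim \N(D,X) \geq 4$. Theorem \ref{teo_princ} then provides a finite morphism $\pi : X \to S \times Y$ with $\rho_X = \rho_S + \rho_Y$, $\rho_S \leq 9$, and $\rho_Y = 1$, giving $\rho_X \leq 10$ immediately.

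In the complementary case $\codim \N(D,X) \leq 3$ for every prime divisor, the strategy is to exhibit a single prime divisor $D$ with $\dim \N(D,X) \leq 2$: combined with $\codim \N(D,X) \leq 8$ from Theorem \ref{teo_princ}, this yields $\rho_X \leq 10$. Since $X$ is a Gorenstein $\mathbb{Q}$-factorial Fano with canonical singularities, the Cone and Contraction Theorems apply and $\overline{\NE}(X)$ has finitely many extremal rays, each with a contraction $\varphi_R : X \to Z$. I would analyze these by type: if $\varphi_R$ is of fiber type with $\dim Z \leq 1$, then $\rho_X = \rho_Z + 1 \leq 2$; if $\varphi_R$ is divisorial with exceptional divisor $E$ contracted to a point, every curve in $E$ has class in $R$, so $\dim \N(E,X) = 1$ and in fact $\rho_X \leq 9$; if $E$ is contracted to a curve, $E$ is ruled over that curve and $\N(E,X)$ is spanned by the fiber class (in $R$) and the pushforward of any multisection, so $\dim \N(E,X) \leq 2$; and if $\varphi_R$ is a conic bundle onto a surface $Z$, I would take $D = \varphi_R^{-1}(C)$ for an irreducible curve $C \subset Z$, observing that curves in $D$ are either components of fibers (with class in $R$) or multisections over $C$, whence $\dim \N(D,X) \leq 2$.

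The hard part will be the sub-case in which every extremal ray of $\overline{\NE}(X)$ is small, contracting only finite unions of curves: no prime divisor of $X$ arises directly from an extremal contraction, and a naive flip-based reduction is delicate because flips of Gorenstein canonical $3$-folds need not preserve the Gorenstein hypothesis required to reapply Theorem \ref{teo_princ} on a flipped model. The expected resolution is to use the length of an extremal ray together with the isolated-singularity assumption to exclude the situation in which every extremal ray is small, or alternatively to exhibit a prime divisor of small $\dim \N(D,X)$ using a minimal covering family of rational curves (in the spirit of the proof of Theorem \ref{teo_princ}); once this obstacle is overcome, the analysis above completes the argument.
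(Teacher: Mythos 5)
Your overall strategy is the paper's: produce a single prime divisor $D$ with $\dim\N(D,X)\leq 2$ by analyzing the elementary contractions of $X$, and combine this with $\codim\N(D,X)\leq 8$ from Theorem \ref{teo_princ} to get $\rho_X\leq\dim\N(D,X)+\codim\N(D,X)\leq 10$. (Your preliminary split according to whether some divisor has $\codim\N(D,X)\geq 4$ is harmless but unnecessary, since the divisor-hunting argument covers all cases; likewise, in the conic-bundle case you should pass to an irreducible component of $\varphi_R^{-1}(C)$ dominating $C$, whereas the paper instead contracts a two-dimensional face of $\NE(X)$ onto a curve and takes a general fiber --- both work.) The genuine problem is the sub-case you flag and leave open: the possibility that every extremal ray of $\overline{\NE}(X)$ is small. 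This is not a removable technicality that one can wave at with ``length of extremal rays'' or covering families: Example \ref{Ex} exhibits a Gorenstein Fano threefold with canonical (non-isolated non-terminal) singularities carrying a small $K$-negative extremal ray, so some input using the hypothesis on the singularities is mandatory, and your proposal does not supply it.

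The missing ingredient is already in the paper, namely Theorem \ref{teo}(2). On a threefold a small elementary contraction has one-dimensional exceptional locus, hence fibers of dimension at most one; since $X$ is Gorenstein everywhere and its canonical isolated singularities give $\dim(NT(X))\leq 0$, Theorem \ref{teo} applies to any such birational $K$-negative contraction and asserts that it is divisorial --- a contradiction. (This is exactly how the paper argues: ``if $\varphi_i$ is birational, then by Theorem \ref{teo} it is divisorial,'' the divisor-to-a-point case being divisorial for trivial dimension reasons.) So small rays simply do not occur, every extremal ray is either divisorial (giving $E$ with $\dim\N(E,X)\leq 2$ as in your analysis) or of fiber type (giving the $\rho_X\leq 2$ or conic-bundle cases), and your case analysis then closes. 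As written, however, the proposal stops short of a proof precisely at the one point where the hypotheses on the singularities must be used.
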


In the setting of Theorem \ref{teo_princ}, the Picard number of $X$ is a topological invariant, since it coincides with the second Betti number of $X$. In fact Kodaira vanishing (see \cite[Theorem 2.70]{km} for the singular version) implies that $H^i(X,\mathcal{O}_X)=0$ for every $i>0$. Considering now the long exact sequence in cohomology induced by the exponential sequence, we see that there is an isomorphism between $H^2(X,\mathbb{Z})$ and the Picard group of $X$, whose rank is $\rho_X$.

In the smooth case, it is well known (\cite{kmm2}) that in every dimension there are only finitely many families of Fano varieties; in particular the Picard number is bounded in any dimension. In dimension $2$ this bound equals $9$, in dimension $3$ is $10$, and in higher dimensions only some partial results are known. 

In the singular case the maximal values for the Picard number are known in some particular low-dimensional cases. It is well known that, if $X$ is a Del Pezzo surface with canonical singularities, its Picard number cannot exceed $9$.

If $X$ is a Gorenstein Fano variety of dimension $3$ with terminal singularities, it can be deformed to a smooth Fano $3$-fold (\cite[Theorem 11]{nam}) and the Picard number is preserved under this deformation (\cite[Theorem 1.4]{jr}). Thus the Picard number of $X$ does not exceed $10$.

If, instead, $X$ is a Fano Gorenstein $3$-fold with canonical isolated singularities, then $X$ is not, in general, a deformation of a smooth Fano $3$-fold. An example is given by the weighted projective space $\mathbb{P}(1,1,1,3)$ (see \cite[Example 1.4]{prok}).

In general, it is clear that, in order to get $9$ (resp. $10$) as a suitable bound for the Picard number of a Del Pezzo surface (resp. Fano threefold), some restrictions on the singularities are necessary. In Example \ref{esempio}, we exhibit a Del Pezzo surface $S$ with non-canonical singularities and index $15$ (recall that the index is the smallest integer $r$ such that $rK_S$ is a Cartier divisor), whose Picard number is $10$. Similarly, $S \times \mathbb{P}^1$ is a non-Gorenstein Fano threefold with non-canonical singularities and Picard number $11$. The surface $S$ was found using the classification of toric log Del Pezzo surfaces of index at most $16$ in \cite{kkn}; the list of such surfaces is available in the Graded Ring Database \cite{bro}.
\medskip

The paper is organized as follows. The first section is a self-contained part devoted to the study of $K$-negative birational contractions with at most one-dimensional fibers defined on varieties with mild singularities. We present a result that will be used in the proofs of Theorems \ref{teo_princ} and \ref{dim3}. Its proof is based on the theorem of existence of flips and its main point is the study of the bahaviour of the discrepancies under the flip.

The second section is entirely devoted to the proof of some preliminary results for Theorem \ref{teo_princ}. In subsection 2.1, we collect some results concerning Mori programs for Fano varieties. In fact, from \cite[Corollary 1.3.2]{bchm}, we know that $\mathbb{Q}$-factorial Fano varieties with canonical singularities are Mori dream spaces (see \cite{hk}); in particular we can run a Mori program for every divisor. Let $X$ be such a Fano variety and $D \subset X$ a prime divisor. We show the existence of a ``special'' Mori program for the divisor $-D$ and we study its properties. In subsection 2.2, we define an invariant of $X$ which was first introduced by C. Casagrande in \cite{cas10}. Under an assumption on such an invariant, we study what happens when we run a special Mori program for $-D$, when $D \subset X$ is a prime divisor such that $\dim\N(D,X)$ is minimal.

The third section is the main body of the paper and contains the proof of Theorem \ref{teo_princ}. After noting that it is sufficient to prove the theorem under the assumption of existence of a divisor $D \subset X$ with $\codim \N(D,X) \geq 4$, we use the results of the second section in order to construct the finite morphism $\pi$ of the statement.

In the fourth section we prove Theorem \ref{dim3} and we make some further remarks.

\medskip

\noindent\textbf{Notation and terminology}

\medskip
\noindent We work over the field of complex numbers.\\
Let $X$ be a normal projective variety.\\
$X$ is called a \textit{Fano variety} if $-K_X$ admits a multiple which is Cartier and ample.\\
We denote by $X_{\reg}$ the smooth locus of $X$ and by $X_{\sing}$ its singular locus.\\
Unless otherwise stated, any divisor will be a \textit{Weil} divisor.\\
A divisor is called \textit{$\mathbb{Q}$-Cartier} if it admits a multiple which is Cartier.\\
$X$ is called \textit{$\mathbb{Q}$-factorial} is every divisor is $\mathbb{Q}$-Cartier.\\
The \textit{index} of $X$ is the smallest integer $r$ such that $rK_X$ is a Cartier divisor.\\
For the definitions and properties of terminal/canonical/log-terminal/... singularities, we refer the reader to \cite{km}.\\
If $X$ has canonical singularities, it is said to be \textit{Gorenstein} if its index is one. A point $p \in X$ is a Gorenstein point if $X$ is Gorenstein in a neighborhood of $p$. The subset of $X$ of its Gorenstein points is open and is called \textit{Gorenstein locus}.\\

\noindent \emph{We denote by $NT(X)$ the closed subset of $X$ made up by canonical non-terminal singularities.}\\ \label{glo}

\noindent $\N(X)$ is the vector space of one-cycles with real coefficients, modulo numerical equivalence.\\
$\mathcal{N}^1(X)$ is the vector space of $\mathbb{Q}$-Cartier divisors with real coefficients, modulo numerical equivalence.\\
$\dim \N(X) = \dim \mathcal{N}^1(X):=\rho_X$ is the \textit{Picard number} of $X$.\\
Let $D \subset X$ be a $\mathbb{Q}$-Cartier divisor. We denote by $[D]$ its numerical equivalence class in $\mathcal{N}^1(X)$.\\
We define $D^{\perp}:=\{\gamma \in \N(X) \ | \ \gamma \cdot D = 0 \}$.\\
Let $C \subset X$ be a one-cycle. We denote by $[C]$ the numerical equivalence class of $C$ in $\N(X)$, by $\mathbb{R}[C]$ the one-dimensional vector space it spans in $\N(X)$ and by $\mathbb{R}_{\geq 0}[C]$ the corresponding ray.\\
The intersection product between $D$ and $C$ is denoted by $D \cdot C$.\\
Let $C \subset X$ be a one-dimensional subscheme. For semplicity we still denote by $D \cdot C$ the intersection product of $D$ with the one-cycle associated to $C$.\\
$\NE(X) \subset \N(X)$ is the convex cone generated by classes of effective curves and $\overline{\NE}(X)$ is its closure.\\
An \textit{extremal ray} $R$ of $X$ is a one-dimensional face of $\overline{\NE}(X)$. We denote by $\Locus(R) \subseteq X$ the union of curves whose class belong to $R$.\\

\noindent A \textit{contraction} of $X$ is a projective surjective morphism with connected fibers $\varphi:X \to Y$ onto a projective normal variety $Y$.\\
The push-forward of one-cycles defined by $\varphi$ induces a surjective linear map $\varphi_*:\N(X) \to \N(Y)$.\\
Define $\NE(\varphi):=\NE(X) \cap \ker(\varphi_*)$.\\
We denote by $\Exc(\varphi)$ the \textit{exceptional locus} of $\varphi$, \textit{i.e.} the locus where $\varphi$ is not an isomorphism.\\
We say that $\varphi$ is \textit{of fiber type} if $\dim(X) > \dim(Y)$, otherwise $\varphi$ is birational.\\
$\varphi$ is called \textit{elementary} if $\dim(\ker(\varphi_*))=1$. In this case we say that $\varphi$ is \textit{divisorial} if $\Exc(\varphi)$ is a prime divisor of $X$ and \textit{small} if its codimension is greater than $1$.\\
A contraction of $X$ is called \textit{$K_X$-negative} (or simply \textit{$K$-negative}) if the canonical divisor $K_X$ of $X$ is $\mathbb{Q}$-Cartier and $-K_X \cdot C > 0$ for every curve $C$ contracted by $\varphi$.\\

\noindent If $Z \subset X$ is a closed set and $i:Z \to X$ is the inclusion map, we set
\[
\N(Z,X):=i_*\N(Z) \subseteq \N(X) \ \ \textmd{and} \ \ \overline{\NE}(Z,X):=i_*(\overline{\NE}(Z)) \subseteq \overline{\NE}(X). 
\]
(Notice that $\overline{\NE}(Z,X) \subseteq \N(Z,X) \cap \overline{\NE}(X)$, but equality does not hold in general.)
\medskip

\noindent We denote by $\Hilb(X)$ the Hilbert scheme of $X$ and by $[Z] \in \Hilb(X)$ the point which corresponds to the subscheme $Z \subset X$.\\
If $Z \subset X$ is a subscheme of $X$, we denote by $Z_{\red}$ the corresponding reduced scheme.
\medskip

\noindent Given a (holomorphic or algebraic) vector bundle $\pi:E \to X$, we denote by $p:\mathbb{P}(E) \to X$ the associated projective bundle.

\medskip

\noindent \textbf{Acknowledgements.} I wish to thank my PhD advisor Cinzia Casagrande for having introduced me to the subject and for her constant guidance and support. This work would not have been possible without her several suggestions.

\section{A result on $K$-negative contractions}

In this section we present a result about $K$-negative birational contractions whose fibers are at most one-dimensional. When the ambient variety is smooth, it is well-known that the exceptional locus of such contractions has codimension one. This is a consequence of a more general result proved by J. Wi\'{s}niewski (\cite[Theorem 1.1]{W}). Moreover the following result holds:

\begin{lem}\label{lem_ando}\cite[Theorem 2.3 and its proof]{ando}
Let $X$ be a smooth variety and let $\varphi:X \rightarrow Y$ be a $K_X$-negative elementary divisorial contraction with fibers of dimension $\leq 1$. Call $E$ the exceptional divisor of $\varphi$. Then $Y$ and $\varphi(E)$ are smooth and $\varphi$ is the blow-up of $Y$ along $\varphi(E)$. Moreover $\varphi(E)$ is isomorphic to the connected component of $\Hilb(X)$ which contains the point corresponding to a non-trivial fiber of $\varphi$, and $\varphi_{|E}$ is the restriction to such a component of the universal family over $\Hilb(X)$.
\end{lem}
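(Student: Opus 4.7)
My plan is to follow the classical analysis of elementary extremal contractions with low-dimensional fibers. First I would describe the non-trivial fibers of $\varphi$: since $\varphi$ is $K_X$-negative and elementary, all curves contracted by $\varphi$ lie in a single extremal ray $R$, and every non-trivial fiber $F$ is a connected scheme of dimension exactly $1$. Using that $X$ is smooth, Mori's bend-and-break together with Wi\'{s}niewski-type inequalities (relating $\dim E$, $\dim F$ and $-K_X \cdot F$) forces each $F$ to be irreducible, reduced, and isomorphic to $\mathbb{P}^1$, with $-K_X \cdot F = 1$: otherwise the union of curves of class in $R$ through a fixed point would already sweep out more than $\dim E = n-1$ dimensions. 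Consequently $\varphi_{|E}\colon E \to \varphi(E)$ is equidimensional with $\mathbb{P}^1$-fibers and $\varphi(E)$ has codimension $2$ in $Y$.

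Next I would pin down the normal bundle of a single fiber. Since $F \cong \mathbb{P}^1$ and $X$ is smooth, $N_{F/X}$ splits as $\bigoplus_{i=1}^{n-1}\mathcal{O}_{\mathbb{P}^1}(a_i)$ with $\sum a_i = -K_X \cdot F - 2 = -1$ by adjunction. Because $F$ is a fiber of a birational contraction all of whose fibers are one-dimensional, no infinitesimal deformation of $F$ in $X$ moves it off $\varphi(F)$; a dimension count of such deformations then rules out any positive summand, forcing each $a_i \leq 0$, and hence
\[
N_{F/X} \cong \mathcal{O}_{\mathbb{P}^1}(-1) \oplus \mathcal{O}_{\mathbb{P}^1}^{\oplus(n-2)}.
\]
This yields $h^0(N_{F/X}) = n-2$ and $h^1(N_{F/X}) = 0$, so $\Hilb(X)$ is smooth of the expected dimension $n-2$ at $[F]$ and all first-order deformations of $F$ are unobstructed.

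The remaining step is to assemble these data into the blow-up picture and the Hilbert-scheme description. Let $H \subseteq \Hilb(X)$ be the connected component through $[F]$; it is smooth of dimension $n-2$ by the previous paragraph. The evaluation morphism from the universal family $\mathcal{U} \to X$ has image $E$ and is injective on points because fibers of $\varphi$ are connected and one-dimensional, so $\mathcal{U} \to E$ is an isomorphism, and the induced morphism $H \to \varphi(E)$ is a bijection between a smooth variety and a normal one, hence an isomorphism; this both proves smoothness of $\varphi(E)$ and identifies $\varphi_{|E}$ with the restriction of the universal family. To conclude that $Y$ is smooth along $\varphi(E)$ and that $\varphi$ is the blow-up, I would work analytically (or formally) along a fiber $F$: the splitting of $N_{F/X}$ globalizes to a rank-$2$ bundle $\mathcal{E}$ on $\varphi(E)$ such that $E \cong \mathbb{P}(\mathcal{E}^\vee)$ with $\mathcal{O}_E(-E)$ relatively ample on $E/\varphi(E)$, and a Fujiki--Nakano contractibility criterion then exhibits $\varphi$ locally as the blow-up of a smooth variety along a smooth codimension-$2$ subvariety. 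I expect this last descent step to be the main obstacle: the normal-bundle data are controlled fiberwise on $E$, and upgrading them into a genuine smooth normal bundle on $\varphi(E) \subset Y$, transverse to the contraction, is the technical heart of Ando's original argument.
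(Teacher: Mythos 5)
First, a point of comparison: the paper does not prove this lemma at all --- it is imported verbatim from \cite[Theorem~2.3]{ando} and used as a black box (in the proof of Theorem~\ref{teo}(4)), so there is no in-paper argument to measure your sketch against. Judged on its own, your outline follows the classical Mori--Ando route (classify the fibers, compute $N_{F/X}$, globalize to a blow-up), which is the right strategy. The description of the fibers in your first paragraph is fine (it is also what \cite[Theorem~1.10]{AW} gives), and the final assembly is plausible modulo the caveat you yourself flag.

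The genuine gap is the step where you claim that ``a dimension count of such deformations rules out any positive summand'' of $N_{F/X}$. The count you have available is that deformations of $F$ stay inside fibers of $\varphi_{|E}$, so $\dim_{[F]}\Hilb(X)\leq n-2$; combined with $\dim_{[F]}\Hilb(X)\geq\chi(N_{F/X})=n-2$ this pins down the \emph{dimension} of the Hilbert scheme, but not its \emph{tangent space} $H^0(N_{F/X})$. A splitting such as $\mathcal{O}_{\mathbb{P}^1}(1)\oplus\mathcal{O}_{\mathbb{P}^1}(-2)\oplus\mathcal{O}_{\mathbb{P}^1}^{\oplus(n-3)}$ also has $\chi=n-2$ and is perfectly consistent with an $(n-2)$-dimensional but singular Hilbert scheme, so the dimension count alone does not exclude it; and the variant ``deformations through a fixed point are trivial, hence $H^0(N_{F/X}(-1))=0$'' runs into the same obstruction problem ($h^0-h^1$ of $N_{F/X}(-1)$ is negative, so a nonzero section need not integrate to an actual deformation). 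This is precisely where Ando's proof does real work: he uses relative Kawamata--Viehweg vanishing to get $R^1\varphi_*\mathcal{O}_X=0$ and the theorem on formal functions to force $H^1$-vanishing on all infinitesimal neighborhoods of $F$, i.e.\ on the symmetric powers $S^m N^\vee_{F/X}$, which kills any summand $\mathcal{O}_{\mathbb{P}^1}(a)$ with $a\geq 1$ (take $m=2$) and simultaneously yields $\dim\mathfrak{m}_y/\mathfrak{m}_y^2=n$, i.e.\ smoothness of $Y$ and the blow-up structure. Everything downstream in your sketch ($h^1(N_{F/X})=0$, smoothness of the component of $\Hilb(X)$, smoothness of $\varphi(E)$ --- note also that your ``bijection onto a normal variety'' argument presupposes normality of $\varphi(E)$, which is part of what is to be proved) depends on closing this step.
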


We show here that a similar result holds if $X$ is allowed to have mild singularities. This is probably well-known to experts in the field; our argument to prove that the exceptional locus has pure codimension one when $X$ has terminal singularities, is based on the theorem of existence of flips and is the same as in \cite[Example 1]{shok}. The generalization to the case of non-terminal isolated singularities is obtained thanks to the theorem of existence of a crepant terminalization. For clarity, we give here a complete proof adapted to our context.

\begin{thm}\label{teo}
 Let $X$ be a normal projective variety of dimension $n\geq 3$ with canonical singularities and with at most finitely many non-terminal points (\textit{i.e.} $\dim(NT(X))\leq0$, where $NT(X)$ is defined in page \pageref{glo}). Let $\varphi:X \rightarrow Y$ be a birational $K_X$-negative contraction whose fibers are at most one-dimensional. Suppose, moreover, that the exceptional locus of $\varphi$ is contained in the Gorenstein locus $G$ of $X$. Then:
\begin{enumerate}
 \item every non-trivial fiber of $\varphi$ is irreducible, has no multiple one-dimensional components and its reduced structure is isomorphic to $\mathbb{P}^1$. Moreover the general non-trivial fiber is smooth, \textit{i.e.} is isomorphic to $\mathbb{P}^1$ as scheme;
 \item let $R_1,\ldots,R_s$ be the extremal rays of $\NE(\varphi) \subseteq \N(X)$. Then, for every $i=1,\ldots,s$, the contraction of $R_i$ is divisorial. Moreover, if $E_i:=\Locus(R_i)$, then $E_1,\ldots,E_s$ are pairwise disjoint prime divisors and
\[
 \Exc(\varphi)=\bigcup_{i=1}^s E_i;
\]
 \item $Y$ has canonical singularities and $\dim(NT(Y)) \leq 0$;
 \item there exists a closed subset $T \subset Y$ with $\codim T\geq 3$ such that $Y \smallsetminus T \subseteq Y_{\reg}$, $\codim \varphi^{-1}(T) \geq 2$, $X \smallsetminus \varphi^{-1}(T) \subseteq X_{\reg}$ and
\[
 \varphi_{|{X \smallsetminus \varphi^{-1}(T)}}: X \smallsetminus \varphi^{-1}(T) \rightarrow Y \smallsetminus T
\]
is the simultaneous blow-up of the $(n-2)$-dimensional pairwise disjoint smooth varieties $\varphi(E_i) \cap (Y \smallsetminus T)$. In particular
\[
 K_X = \varphi^*(K_Y) + E_1 + \cdots + E_s;
\]
 \item for every $i=1,\ldots,s$, let $f_i \subset X$ be an irreducible curve such that $[f_i]\in R_i$. Then
\[
 K_X \cdot f_i = E_i \cdot f_i = -1.
\]

\end{enumerate}
\end{thm}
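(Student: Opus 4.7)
The plan is to prove (2) first: once every extremal ray of $\NE(\varphi)$ is known to be contracted by a divisorial morphism, the other statements will follow from Lemma \ref{lem_ando} applied to $\varphi$ on a suitable open subset of $Y$ whose complement has codimension $\geq 3$, together with a standard Weil-divisor extension argument for the discrepancy formula.

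\textbf{Proof of (2).} I fix an extremal ray $R$ of $\NE(\varphi)$ with contraction $\psi:X\to Z$ (which exists by the Cone Theorem for $\mathbb{Q}$-factorial canonical varieties) and argue by contradiction, supposing $\psi$ small. I invoke \cite{bchm} twice: first to produce a $\mathbb{Q}$-factorial crepant terminalization $\nu:\tilde X\to X$, which is an isomorphism outside the finite set $NT(X)$ and satisfies $K_{\tilde X}=\nu^*K_X$; and second to produce the $R$-flip $\psi^+:X^+\to Z$, with $X^+$ $\mathbb{Q}$-factorial canonical and $K_{X^+}$ $\psi^+$-ample. Adapting the discrepancy bookkeeping of \cite[Example 1]{shok}, I compare discrepancies of a common resolution of $\tilde X$ and $\tilde X^+$ (a crepant terminalization of $X^+$) along exceptional divisors dominating the flipping locus: they strictly increase under the flip. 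Since $\Exc(\psi)\subseteq \Exc(\varphi)\subseteq G$, the flipping curves are Gorenstein, so all relevant discrepancies are integers, and after terminalization they are $\geq 1$; the strict increase therefore contradicts the canonical index being $1$. Hence $\psi$ is divisorial. Setting $E_i:=\Locus(R_i)$, each is a prime divisor; if $E_i\cap E_j$ contained a curve $C$ with $i\neq j$, then $[C]\in R_i\cap R_j$ would force $R_i=R_j$, so the $E_i$ are pairwise disjoint. Their union equals $\Exc(\varphi)$ by a standard application of the Cone Theorem.

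\textbf{Proofs of (1), (3), (4), (5).} For (1), a non-trivial fiber $F$ of $\varphi$ is connected by Zariski's Main Theorem and sits inside a unique $E_i$; all $1$-dimensional components of $F$ have class in $R_i$, so the minimal-length property of $R_i$ forces $F$ to be irreducible with no multiple $1$-dimensional components and $F_{\red}\cong\mathbb{P}^1$, generically smooth scheme-theoretically by generic smoothness of $\psi_i|_{E_i}$. For (4), set
\[
T:= Y_{\sing}\cup \varphi(NT(X))\cup \varphi(\Exc(\varphi)\cap X_{\sing})\cup \bigcup_i (\varphi(E_i))_{\sing},
\]
and verify from $\Exc(\varphi)\subseteq G$ and $\dim NT(X)\leq 0$ that $\codim_Y T\geq 3$ and $\codim_X \varphi^{-1}(T)\geq 2$; on $X\smallsetminus\varphi^{-1}(T)$ both $X$ and $Y$ are smooth, so Lemma \ref{lem_ando} identifies $\varphi$ there with the simultaneous blow-up of the smooth $(n-2)$-dimensional varieties $\varphi(E_i)\smallsetminus T$, and the classical blow-up computation gives $K_X=\varphi^*K_Y+\sum E_i$; the formula extends to all of $X$ because both sides are Weil divisors agreeing in codimension one. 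For (3), the formula from (4) implies that for any exceptional valuation $F$ over $Y$ the discrepancy $a(F,Y)$ dominates $a(F,X)$, so canonical (respectively terminal) singularities on $X$ transfer to $Y$, and $NT(Y)\subseteq \varphi(NT(X))$ is finite. Finally (5) is the computation on a smooth fiber $f_i\cong\mathbb{P}^1$: $\varphi^*K_Y\cdot f_i=0$ and $N_{E_i/X}|_{f_i}=\mathcal{O}_{\mathbb{P}^1}(-1)$ from the blow-up description yield $E_i\cdot f_i=-1$ and then $K_X\cdot f_i=-1$ from the formula.

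\textbf{Main obstacle.} The critical step is (2), ruling out small $K$-negative elementary contractions with one-dimensional fibers. The classical Wi\'{s}niewski-type inequality is unavailable because $X$ may be singular along the fibers. The flip/discrepancy argument of \cite[Example 1]{shok} has to be carried out in tandem with the crepant terminalization, using the Gorenstein hypothesis on $\Exc(\varphi)$ together with $\dim NT(X)\leq 0$ to force the needed integrality of discrepancies.
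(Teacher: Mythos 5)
Your reduction of everything to statement (2) is reasonable in outline, but the argument you give for (2) has a genuine gap, and it is exactly the point the paper spends most of its effort on. The flip/discrepancy comparison does not produce a contradiction merely because discrepancies ``strictly increase and are integral'': an integer discrepancy can perfectly well increase from $1$ to $2$ across the flip. The contradiction in the paper's proof comes from exhibiting a \emph{specific} divisor with discrepancy exactly $1$ over $X^+$, namely the exceptional divisor of the blow-up of a codimension-two component $\Lambda$ of the flipped locus $A^+$; integrality (from the Gorenstein hypothesis) then forces its discrepancy over $X$ to be $\leq 0$, contradicting terminality. Such a $\Lambda$ exists only because $\dim A^+=n-2$, which by \cite[Lemma 5.1.17]{kmm} holds only when $\dim\Exc(\psi)=1$. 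So the flip argument rules out \emph{one-dimensional} exceptional loci only. Nothing in your proposal excludes a small contraction whose exceptional locus has dimension between $2$ and $n-2$ (still compatible with fibers of dimension $\leq 1$); if $A^+$ has codimension $c\geq 3$, the natural divisor over it has discrepancy $c-1\geq 2$ on $X^+$ and a positive integer discrepancy on $X$ is no contradiction. The paper closes this gap with a separate induction on $n$: cutting with a general member of $\varphi^*|H|$ for $H$ very ample on $Y$ reduces to the case where every component of $\Exc(\varphi)$ has dimension $1$ or $n-1$ (steps \ref{teo2}--\ref{teo3}). This induction is entirely absent from your proposal and cannot be dispensed with. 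Your treatment of the non-terminal case is also under-specified: after lifting to the terminalization $\tau:\tilde X\to X$ one must check that the lifted ray still has fibers of dimension $\leq 1$ and that its exceptional divisor is not $\tau$-exceptional (the paper does this using that $\tau$-exceptional divisors have discrepancy zero and $NT(X)$ is finite).

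Two smaller problems. First, your disjointness argument is wrong as stated: a curve $C\subseteq E_i\cap E_j$ need not have class in $R_i\cap R_j$, since curves contained in $\Locus(R_i)$ need not have class in $R_i$. The paper instead deduces disjointness from part (1): a non-trivial fiber of $\varphi_i$ lies in a fiber of $\varphi$ and both have irreducible reduced structure, so they coincide, whence fibers of $\varphi_i$ and $\varphi_j$ cannot meet. Second, you derive (1) from (2), whereas the paper proves (1) first (by citing \cite[Theorem 1.10(i)]{AW}) and uses it inside the proof of (2); your ``minimal length'' argument for irreducibility of fibers is not a proof --- a fiber could a priori be a connected chain of curves all with class in $R_i$ --- and the actual content here is the Andreatta--Wi\'sniewski result, which you would need to invoke.
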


\begin{proof}
\begin{nr}\textbf{Proof of (1).} The first assertion of (1) follows from \cite[Theorem 1.10 (i)]{AW}. The general fiber is smooth because, by our assumptions, $\dim(X_{\sing}) \leq n-3$ (see \cite[Corollary 5.18]{km}) and hence $X_{\sing}$ cannot dominate $\varphi(\Exc(\varphi))$.
\end{nr}
\medskip
The proof of (2) requires some preliminary steps.

\begin{nr}\label{teo0}\textbf{$\Locus(R_1),\ldots,\Locus(R_s)$ are pairwise disjoint. Moreover, if they are prime divisors, $\Exc(\varphi)$ is their union.}
Fix an index $i \in \{1, \ldots, s\}$ and let $\varphi_i:X \rightarrow Y_i$ be the contraction of $R_i$; then every non-trivial fiber $f_i$ of $\varphi_i$ is contracted by $\varphi$, hence is contained in a fiber of $\varphi$, say $f$. Applying (1) to both $\varphi$ and $\varphi_i$, we see that the reduced structures of $f$ and $f_i$ are irreducible, hence they coincide. In particular every fiber of $\varphi_i$ is disjoint from any fiber of $\varphi_j$ whenever $i \neq j$.

Suppose now that every $\varphi_i$ is divisorial with exceptional divisors $E_i$ and let $C$ be an irreducible curve contracted by $\varphi$. Then $[C] \in \NE(\varphi)=R_1 + \cdots + R_s$ and we can write
\[
 [C] = \sum_{i=1}^s \lambda_i [f_i],
\]
where for every $i=1, \ldots s$, $[f_i]$ generates $R_i$, $\lambda_i \geq 0$ and $\lambda_j>0$ for at least one $j$. Intersecting with $E_j$, we get:
\[
C \cdot E_j = \lambda_j (f_j \cdot E_j) <0,
\]
so that $C \subset E_j$. Hence $\Exc(\varphi)= \bigcup_{i=1}^s \Exc(\varphi_i)$. 
\end{nr}
\begin{nr}\label{teo1}\textbf{If $X$ has terminal singularities and $\varphi$ is elementary, $\Exc(\varphi)$ cannot be one-dimensional.} By contradiction suppose that this is the case. Let $\varphi^+: X^+ \rightarrow Y$ be the flip of $\varphi$, which exists by \cite[Corollary 1.4.1]{bchm}; denote by $A^+ \subset X^+$ its exceptional locus and let $\Phi: X \dashrightarrow X^+$ be the resulting birational map. Since $\dim(\Exc(\varphi))=1$, by \cite[Lemma 5.1.17]{kmm} (notice that in \cite{kmm} the $\mathbb{Q}$-factoriality is required, but this assumption is actually not necessary), the dimension of $A^+$ is $n-2$.

Let us consider a common smooth resolution $Z$ of $X$ and $X^+$:
\[
\xymatrix{
& \ar[dl]_f Z \ar[dr]^g & \\
X \ar[dr]_{\varphi} \ar@{-->}[rr]^\Phi & & X^+ \ar[dl]^{\varphi^+} \\
& Y. &
}
\]
According to Hironaka's results, we can suppose the exceptional locus of $f$ and $g$ to be of pure codimension $1$. We can write:
\begin{equation}
\label{can}
 K_Z \sim f^*K_X + \sum_{i=0}^{k}a_i E_i \sim g^* K_{X^+} + \sum_{i=0}^k b_i E_i,
\end{equation}
where $E_0,\ldots,E_k \subset Z$ are the exceptional divisors and $0 \leq a_i \leq b_i$ for every
$i= 0, \ldots, k$ (for the second inequality see \cite[Lemma 3.38]{km}); in particular $X^+$ has terminal singularities. Let us notice, moreover, that $\sum_{i=0}^{k}a_i E_i$ is an integral Cartier divisor on $f^{-1}(G)$ (ehere $G$ is the Gorenstein locus of $X$); in particular the coefficient $a_i$ is an integral number whenever $E_i$ intersects $f^{-1}(G)$.

Let $\Lambda \subseteq A^+$ be an irreducible component of dimension $n-2$; then $X^+$ is smooth at the generic point of $\Lambda$.

Set $X_0^+:= X^+ \smallsetminus((X^+)_{\sing} \cup \Lambda_{\sing})$. Then $\Lambda_0 := \Lambda \cap X_{0}^{+}$ is non empty, smooth and has codimension $2$ in $X_{0}^{+}$, which is also smooth. Let us consider the blow-up $\pi: B \rightarrow X_{0}^{+}$ of $X_{0}^{+}$ along $\Lambda_0$; we have
\[
 K_B \sim \pi^*K_{X_{0}^{+}} + H_0,
\]
where $H_0 \subset B$ is the $\pi$-exceptional divisor. Set $Z_0:=g^{-1}(X_{0}^{+})$ and $g_0:=g_{|Z_0}$. By our assumptions $(g_0)^{-1}(\Lambda_0)$ is of pure codimension $1$ in $Z_0$, hence it is a Cartier divisor. Then, by the universal property of blow-up, the morphism $g_0$ factors through $\pi$:
\[
\xymatrix{
Z_0 \ar[dr]^{g_0} \ar[r]^h & B \ar[d]^{\pi}\\
& X_{0}^{+}.\\
}
\]
We can write:
\begin{equation}
\label{can0}
\begin{array}{ccl}
K_{Z_0} & \sim & h^* K_{B} + \sum_{i = 1}^r e_i F_i\\
& \sim & h^*(\pi^* K_{X_{0}^+} + H_0) + \sum_{i =1}^r e_i F_i\\
& \sim & (g_{0})^*K_{X_{0}^+} + F_0 + \sum_{i =1}^r(e_i + f_i)F_i,
\end{array}
\end{equation}
where $F_0 \subset Z_0$ is the transform of $H_0$ and, for every $i=1, \ldots, r$, $F_i$ is an $h$-exceptional prime divisor and $e_i$ and $f_i$ are integral numbers. In particular the closures $E_0, \ldots, E_r$ of $F_0, \ldots, F_r$ in $Z$ are $g$-exceptional divisors. Comparing \eqref{can} with \eqref{can0}, we finally deduce that $b_0=1$. By construction we know that $g(E_0) = \Lambda$. By \cite[Lemma 3.38]{km}, for every exceptional divisor $E_j$ such that $g(E_j) \subseteq A^+$, we have $a_j < b_j$. Thus $a_0 < b_0=1$; moreover $a_0$ is an integral number, because $E_0 \subseteq f^{-1}(G)$. Hence $a_0 = 0$, but this not possible because we are assuming that $X$ has terminal singularities.
\end{nr}
\begin{nr}\label{teo2}\textbf{If $X$ has terminal singularities and every irreducible component of $\Exc(\varphi)$ has dimension $1$ or $n-1$, then (2) holds.} For $i=1,\ldots,s$, let $\varphi_i$ be the contraction of the extremal ray $R_i$. By \ref{teo1}, we know that $\dim(\Exc(\varphi_i))\geq 2$ for every $i=1, \ldots,s$. Moreover, by \ref{teo0}, $\Exc(\varphi_i)$ is a union of irreducible components of $\Exc(\varphi)$. Then the $\varphi_i$'s are all divisorial and, by \ref{teo0}, their exceptional divisors $E_1,\ldots,E_s$ cover $\Exc(\varphi$). Hence we get a contradiction with the existence of a one-dimensional component. 
\end{nr}
\begin{nr}\label{teo3}\textbf{Proof of (2) when $X$ has terminal singularities.} Let us proceed by induction on $n$. 
When $n=3$, (2) holds by \ref{teo2}.

Let us suppose $n > 3$. Let us pick $H$ a general very ample divisor of $Y$. Set $\tilde{H}=\varphi^*(H)\subset X$ and let $\tilde{\varphi}$ be
the restriction of $\varphi$ to $\tilde{H}$. The morphism $\tilde{\varphi}$ is still a contraction, \textit{i.e.} it has connected fibers, and $H$ is normal.

By \ref{teo2}, the exceptional locus of $\varphi$ cannot be one-dimensional and this assures that $\tilde{\varphi}$ is not an isomorphism.  
The linear sistem of $\tilde{H}$ is base point free; then, by \cite[Lemma 5.17]{km}, we know that $\tilde{H}$ 
has terminal singularities. 
By the adjunction formula, given a curve $C$ contracted by $\tilde{\varphi}$, we have:
\[
 K_{\tilde{H}} \cdotp C =K_X \cdotp C + \tilde{H} \cdotp C = K_X \cdotp C + \varphi^{*}(H) \cdotp C < 0.
\]
Moreover, the points in $\tilde{H} \cap G$ are Gorenstein for $\tilde{H}$ and $\Exc \tilde{\varphi}$ is contained in the Gorenstein locus of $\tilde{H}$.

Thus $\tilde{H}$ and $\tilde{\varphi}$ satisfy all the assumptions of the theorem. By induction, every irreducible component of
$\Exc(\tilde{\varphi})$ has codimension one in $\tilde{H}$; then (2) holds by \ref{teo2}.
\end{nr}
\begin{nr}\textbf{Proof of (2): general case.} By \ref{teo0}, it is enough to prove (2) when $\varphi$ is elementary. By \cite[Corollary 1.4.3]{bchm}, there exists a birational morphism $\tau:\tilde{X}\to X$ such that:
\begin{enumerate}
 \item $\tilde{X}$ has $\mathbb{Q}$-factorial terminal singularities;
 \item given a resolution $f:Z\to X$ of singularities $X$, the $\tau$-exceptional divisors correspond to $f$-exceptional divisors with discrepancy zero;
 \item $K_{\tilde{X}}=\tau^{*}(K_X)$.
\end{enumerate}
Given an irreducible curve $C \subset \tilde{X}$ such that $\tau(C)$ is a curve contracted by $\varphi$, we have $K_{\tilde{X}}\cdot C<0$. Thus we can find an extremal ray $\tilde{R}\in \overline{NE}(\varphi \circ \tau)$ of $\overline{NE}(\tilde{X})$ such that $K_{\tilde{X}} \cdot \tilde{R}<0$; let $\tilde{\varphi}: \tilde{X}\to \tilde{Y}$ be its contraction. Then $\varphi \circ \tau$ factors through $\tilde{\varphi}$; in particular $\tilde{\varphi}$ is birational. Moreover its fibers are at most one-dimensional: if, by contradiction, $F$ is a fiber of $\tilde{\varphi}$ with $\dim(F)\geq 2$, then $\tau$ cannot be finite on $F$. Hence, by (3), there exists a curve $C \subset F$ with $K_{\tilde{X}}\cdot C=0$ and this is impossible because $[C]\in \tilde{R}$. A similar argument shows that $\Exc(\tilde{\varphi}) \subseteq \tau^{-1}(\Exc(\varphi))$; then $\Exc(\tilde{\varphi})$ is contained in the Gorenstein locus of $\tilde{X}$. We can thus apply \ref{teo3} to ${\tilde{\varphi}}$ and conclude that it is divisorial.

Let $D$ be the exceptional divisor of $\tilde{\varphi}$. Let us suppose that $D$ is exceptional for $\tau$. Then, by (1) and (2), we find an exceptional divisor $E \subset Z$ with discrepancy zero such that $f(E)=\tau(D)$. In particular $X$ has non-terminal singularities along $\tau(D)$, so that, by our assumptions, $\tau(D)$ is a point. In particular, by (3), we see that for every irreducible curve $C\subset D$ we have $K_{\tilde{X}}\cdot C=0$. This is not possible because $[C]\in \tilde{R}$; thus $\dim(\tau(D))=n-1$ and $\varphi$ is divisorial.
\end{nr}

\begin{nr}\textbf{Proof of (3), (4) and (5).} Statement (3) easily follows from \cite[Lemma 3.38]{km}: $Y$ has canonical singularities and might have non-terminal ones only at the images of the non-terminal singularities of $X$, which are finitely many by assumption. In particular $\dim(Y_{\sing}) \leq n-3$ by \cite[Corollary 5.18]{km}.

Let us prove (4). Since the general non-trivial fiber of $\varphi$ is contained in $X_{\reg}$, we have
\[
 \dim(\varphi([\Exc(\varphi)]^{\sing})) < \dim(\varphi(\Exc(\varphi)))=n-2,
\]
where $[\Exc(\varphi)]^{\sing} \subset \Exc(\varphi)$ is the closed subset made up by the non-trivial fibers which intersect the singular locus of $X$. Let us define $T:=Y_{\sing} \cup \varphi([\Exc(\varphi)]^{\sing})$, so that (4) follows from the smooth case applying Lemma \ref{lem_ando} $s$ times locally around each $E_i$.

Finally, (5) follows from the previous statements.
\end{nr} \end{proof}

\begin{defi}
 A contraction $\varphi$ as in Theorem \ref{teo} will be called \textit{of type $(n-1,n-2)^{eq}$} (the superscript standing for \textit{equidimensional}, referred to non-trivial fibers). If $\varphi$ is elementary with extremal ray $R$, the ray $R$ itself will be called \textit{of type $(n-1,n-2)^{eq}$}.
\end{defi}

The following example shows that the assumption on the non-terminal locus cannot be weakened and that an analogue of Theorem \ref{teo} is not true if we allow arbitrary canonical singularities.

\begin{ex}\label{Ex} For every $n \geq 3$, we construct an $n$-dimensional Fano variety $X$ with canonical singularities and with an elementary small contraction whose exceptional locus is one-dimensional.

Fix an integer $n \geq 3$ and consider  over $P:=\mathbb{P}^1 \times \mathbb{P}^{n-2}$ the projective bundle $Y = \mathbb{P}(\mathcal{E})$, where $\mathcal{E}$ is the rank-$2$ vector bundle 
\[
\mathcal{E}=\mathcal{O}_P \oplus \mathcal{O}_P(1,n-1).
\]
Let $p: Y \rightarrow P$ be the projection map and denote by $\mathcal{O}_Y(1)$ the tautological bundle. This is a nef but not ample line bundle; from the formula for the canonical bundle of $Y$
\begin{equation}\label{form}
\begin{array}{ccl}
\mathcal{O}_Y(K_Y) &  \sim & p^*(\mathcal{O}_P(K_P) \otimes \det(\mathcal{E})) \otimes \mathcal{O}_Y(-2) \\
& \sim & p^*(\mathcal{O}_P(-1,0)) \otimes \mathcal{O}_Y(-2),
\end{array}
\end{equation}
we see that $-K_Y$ is nef.

Let $E \simeq \mathbb{P}^1 \times \mathbb{P}^{n-2} \subset Y$ be the section of $p$ defined by the surjection of sheaves $\mathcal{E} \rightarrow \mathcal{O}_P$. The divisor $E$ has normal bundle $\mathcal{N}_{E\slash Y} = \mathcal{O}_P(-1,-(n-1))$. Let $\mathbb{P}^1 \simeq l_1 \subseteq \{\textmd{point}\} \times \mathbb{P}^{n-2}$ and $\mathbb{P}^1 \simeq l_2 = \mathbb{P}^1\times \{\textmd{point}\}$ be lines in $E$ contracted, respectively, by the first and the second projection of $E=\mathbb{P}^1 \times \mathbb{P}^{n-2}$. Then
\[
E \cdot l_1 = -(n-1) \ \ \textmd{and} \ \ E \cdot l_2 = -1.
\]

The variety $Y$ has three elementary contractions, corresponding to three generators of its nef cone. The first one is the morphism $p$, while the other two 
\[
f:Y \rightarrow X \ \ \textmd{and} \ \ g:Y \rightarrow Z
\]
are the contractions of the extremal rays generated, respectively, by $[l_1]$ and $[l_2]$. The contractions $f$ and $g$ are divisorial with exceptional divisor $E$, which is contracted to $\mathbb{P}^1$ by $f$ and to $\mathbb{P}^{n-2}$ by $g$.

We may write
\[
K_Y \sim f^*(K_X) + aE \sim g^*(K_Z) + bE,
\]
where $a$ and $b$ are some rational coefficients. Intersecting with $l_1$ and $l_2$ and using \eqref{form}, we get:
\[
0 = l_1 \cdot K_Y = l_1 \cdot (f^*(K_X) + aE) = -a(n-1)
\]
and
\[
-1 = l_2 \cdot K_Y = l_2 \cdot (g^*(K_Z) + bE) = -b,
\]
so that $a=0$ and $b=1$. In particular $K_X$ is a Cartier divisor, $-K_X$ is ample and the singular locus of $X$ is the curve $f(E)=f(l_2)$, which is made up by canonical non-terminal singularities.

The ray generated by $[f(l_2)]$ is extremal in $\NE(X)$ and is contracted by a morphism $\varphi: X \rightarrow W$, whose exceptional locus is the curve $f(E)$ and whose flip is $g \circ f^{-1}:X \dashrightarrow Z$.
\end{ex}

Let us recall a theorem concerning birational $K$-negative contractions with fibers of dimension at most one, defined on a varieties which are not necessarily Gorenstein.

\begin{lem}\label{ishii}\cite[Lemma 1.1]{I} Let $X$ be a projective variety with canonical singularities, and let $\varphi: X \rightarrow Y$ be an elementary birational $K_X$-negative contraction whose fibers are at most one-dimensional. If $F_0$ is an irreducible component of a non-trivial fiber of $\varphi$ containing a Gorenstein point of $X$, then $-K_X \cdot F_0 \leq 1$.
\end{lem}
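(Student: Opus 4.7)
My approach combines a length estimate for the extremal ray contracted by $\varphi$ with a local analysis at the Gorenstein point $p\in F_0$, in the spirit of Ando's lemma (Lemma~\ref{lem_ando}) extended to the singular setting. Let $R$ be the extremal ray of $\overline{\NE}(X)$ contracted by the elementary $K_X$-negative map $\varphi$. The first step is to invoke the Wi\'{s}niewski-type fiber-dimension inequality
\[
\dim\Exc(\varphi)+\dim F\geq \dim X+\ell(R)-1,
\]
valid for any fiber $F$ of a $K_X$-negative extremal contraction. Since $\varphi$ is birational and elementary, $\dim\Exc(\varphi)\leq n-1$, and combined with the hypothesis $\dim F\leq 1$ this forces $\ell(R)\leq 1$. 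In particular there exists a rational curve $\ell_0\subset X$ with $[\ell_0]\in R$ and $-K_X\cdot\ell_0\leq 1$.

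The second step is to transfer this bound from the abstract curve $\ell_0$ to the specific component $F_0$. Since $R$ is one-dimensional and $[F_0]\in R$, we may write $[F_0]=\lambda[\ell_0]$ in $\N(X)$ for some $\lambda>0$, and it suffices to show $\lambda(-K_X\cdot\ell_0)\leq 1$. This is where the Gorenstein hypothesis enters: in a neighborhood of $p$ the divisor $K_X$ is Cartier, so local intersection numbers of $-K_X$ with curves at $p$ are integers. I would combine this integrality with an Ando-style local description of $\varphi$ near $p$, possibly obtained after passing to a crepant terminalization $\tau:\tilde X\to X$ as in \cite[Corollary 1.4.3]{bchm} (which is an isomorphism over the Gorenstein canonical point $p$), in order to argue that $F_0$ has multiplicity one at $p$ and that the local contribution at $p$ captures the entire intersection $-K_X\cdot F_0$, yielding the desired bound.

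The main obstacle I anticipate is carrying out this last step rigorously. The point $p$ is Gorenstein and canonical but neither terminal nor smooth in general, so Ando's smooth classification of divisorial contractions cannot be applied verbatim. One must either run Ando's argument directly on the Gorenstein germ of $X$ at $p$, or pass to a suitable terminal model and carefully control the discrepancies introduced by the birational modification; the need to localize around a single Gorenstein point, rather than assume globally that $\Exc(\varphi)$ is contained in the Gorenstein locus as in Theorem~\ref{teo}, is what makes this step genuinely delicate.
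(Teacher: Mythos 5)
The paper offers no proof of this lemma: it is imported verbatim from Ishii \cite[Lemma 1.1]{I}, so there is no internal argument to compare yours with, and your attempt must stand on its own. As written it does not. Your first step invokes the Ionescu--Wi\'sniewski inequality $\dim\Exc(\varphi)+\dim F\geq\dim X+\ell(R)-1$ for a variety with canonical singularities; this inequality is a theorem about smooth varieties (see \cite{W}), and its proof rests on deformation estimates for rational curves that require smoothness, or at least local complete intersection, along the curves being deformed. It is not available off the shelf here, and even if granted it only produces \emph{some} rational curve $\ell_0$ in the ray with $-K_X\cdot\ell_0\leq 1$; it says nothing about the particular component $F_0$, whose class is $\lambda[\ell_0]$ with $\lambda$ a priori unbounded.

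The second step is where the proof is missing rather than merely sketchy, and the one concrete mechanism you propose does not work. The quantity $-K_X\cdot F_0$ is the degree of the $\mathbb{Q}$-line bundle $\mathcal{O}_X(-K_X)|_{F_0}$ on the complete curve $F_0$, a global invariant; the hypothesis that $K_X$ is Cartier near the single point $p$ does not make this degree an integer (for that you would need $K_X$ Cartier along all of $F_0$), so there is no ``local integrality at $p$'' to exploit. Even if integrality held, it would give $-K_X\cdot F_0\geq 1$, the wrong direction: the content of the lemma is an \emph{upper} bound, which must come from a rigidity argument --- roughly, that deformations of the rational curve $F_0$ anchored at $p$ remain inside the one-dimensional fiber $\varphi^{-1}(\varphi(p))$ and hence cannot move, which caps $-K_X\cdot F_0$ via the deformation-theoretic lower bound on the Hom-scheme at $p$ (this is where the Gorenstein hypothesis genuinely enters, and it is delicate precisely because Gorenstein canonical points need not be l.c.i.). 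Finally, the parenthetical claim that the crepant terminalization of \cite[Corollary 1.4.3]{bchm} is an isomorphism over $p$ is false: that morphism is an isomorphism only over the terminal locus, and it extracts divisors of discrepancy zero over every canonical non-terminal point, Gorenstein or not. Since you acknowledge that the decisive step is not carried out, the proposal does not establish the lemma; if you want a self-contained argument you should model it on the rigidity/deformation analysis in \cite{AW} and \cite{I} rather than on an integrality count.
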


\section{Preliminary results}
\subsection{Mori programs for Fano varieties}\label{ss}
Let us collect in the following theorem some important results about singular Fano varieties. For the definition and the main properties of Mori dream spaces we refer the reader to \cite{hk}.

\begin{thm} \label{teomp}Let $X$ be a $\mathbb{Q}$-factorial Fano variety with canonical singularities. Then for any prime divisor $D \subset X$, there exists a finite sequence
\begin{equation}\label{mp}
X=X_0 \stackrel{\sigma_0}{\dashrightarrow} X_1 \dashrightarrow \cdots \dashrightarrow X_{k-1} \stackrel{\sigma_{k-1}}{\dashrightarrow}X_k \xrightarrow{\psi} Y
\end{equation}
such that, if $D_i \subset X_i$ is the transform of $D$ for $i=1,\ldots,k$ and $D_0:=D$, the following hold:
\begin{enumerate}
 \item $X_1, \ldots, X_k$ and $Y$ are $\mathbb{Q}$-factorial projective varieties and $X_1, \ldots, X_k$ have canonical singularities;
 \item for every $i=0, \ldots, k$ there exists an extremal ray $Q_i$ of $X_i$ with $D_i \cdot Q_i>0$ and $-K_{X_i} \cdot Q_i >0$ such that:
\begin{enumerate}
\item for $i=0, \ldots, k-1$, $\Locus(Q_i) \subsetneq X_i$, and $\sigma_i$ is either the contraction of $Q_i$ (if $Q_i$ is divisorial), or its flip (if $Q_i$ is small);
\item the morphism $\psi:X_k \rightarrow Y$ is the contraction of $Q_k$ and $\psi$ is a fiber type contraction;
\end{enumerate} 
\item $\#\{i \in \{ 0, \ldots, k\} \ | \ Q_i \nsubseteq \N(D_i,X_i)\} = \codim \N(D,X)$.\\Moreover, if we set $c_i:= \codim \N(D_i,X_i)$ for $i=0, \ldots, k$, we have
\[c_{i+1}=\begin{cases} c_{i}\quad&\text{if $Q_i\subseteq\N(D_i,X_i)$}\\
c_{i}-1\quad&\text{if $Q_i\nsubseteq\N(D_i,X_i)$}
\end{cases},\ \text{ and }
c_{k}=\begin{cases} 0\quad&\text{if $Q_k\subseteq\N(D_k,X_k)$}\\
1\quad&\text{if $Q_k\nsubseteq\N(D_k,X_k)$.}
\end{cases}\]
for every $i=0, \ldots, k-1$.
\end{enumerate}
\end{thm}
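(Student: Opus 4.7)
My plan is to run a $(-D)$-MMP on $X$ with scaling of $-K_X$. Since $X$ is a $\mathbb{Q}$-factorial Fano variety with canonical (hence klt) singularities, \cite[Corollary 1.3.2]{bchm} guarantees that $X$ is a Mori dream space, so any such MMP terminates and, along $K$-negative steps, both $\mathbb{Q}$-factoriality and canonical singularities are preserved by standard results in \cite{km}. At step $i$, the scaling construction produces an extremal ray $Q_i$ on which $-D_i-\lambda_i K_{X_i}$ vanishes for some $\lambda_i>0$, so that
\[
D_i \cdot Q_i \;=\; \lambda_i\,(-K_{X_i})\cdot Q_i,
\]
and hence $D_i\cdot Q_i>0$ forces $-K_{X_i}\cdot Q_i>0$ simultaneously. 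Each $\sigma_i$ is then the contraction of $Q_i$ if $Q_i$ is divisorial and the flip otherwise, giving (1) and (2)(a).

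Termination cannot occur at a minimal model, because every $D_i$ is a nonzero effective divisor (no step contracts $D_i$, since $D_i\cdot Q_i>0$ prevents $D_i=\Exc(\sigma_i)$), whence $(-D_i)\cdot H^{n-1}<0$ for any ample $H$ on $X_i$, so $-D_i$ is never pseudoeffective. Hence the program must terminate with a Mori fiber space $\psi:X_k\to Y$ whose extremal ray $Q_k$ satisfies the same positivity by the final step of scaling, proving (2)(b).

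For (3), set $V_i:=\N(D_i,X_i)$ and $c_i:=\codim V_i$. In the divisorial case, $(\sigma_i)_*$ is surjective with kernel $\mathbb{R}[Q_i]$ and a direct check shows $(\sigma_i)_* V_i = V_{i+1}$, so $\dim V$ drops by one precisely when $Q_i\subseteq V_i$, while $\rho$ always drops by one. In the flip case, $\rho$ is preserved, and the fact that $\sigma_i$ is an isomorphism in codimension one provides natural identifications $\mathcal{N}^1(X_i)\cong \mathcal{N}^1(X_{i+1})$ and $\N(X_i)\cong \N(X_{i+1})$ sending $\mathbb{R}[Q_i]$ to $\mathbb{R}[Q_i^+]$. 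The sign reversal $D_{i+1}\cdot Q_i^+<0$ forces every effective representative of $Q_i^+$ to lie inside $D_{i+1}$, so $\mathbb{R}[Q_i]\subseteq V_{i+1}$; combined with the isomorphism off the (anti)flipping locus, this yields $V_{i+1}=V_i+\mathbb{R}[Q_i]$, and $\dim V$ jumps by one precisely when $Q_i\not\subseteq V_i$. In both cases $c_{i+1}=c_i$ if $Q_i\subseteq V_i$ and $c_{i+1}=c_i-1$ otherwise. Finally, $D_k$ surjects onto $Y$ under $\psi$: otherwise a fiber disjoint from $D_k$ would contain curves with $D_k$-intersection zero, contradicting $D_k\cdot Q_k>0$. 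Hence $\psi_* V_k=\N(Y)$, and a kernel computation gives $c_k\in\{0,1\}$ as stated, with $c_k=1$ iff $Q_k\not\subseteq V_k$. Summing the increments $c_i-c_{i+1}\in\{0,1\}$ delivers the equality in (3).

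The main obstacle is the flip analysis: one must track $\N(D_i,X_i)\subseteq \N(X_i)$ through a small modification that is not a morphism. The crucial input is the automatic negativity $D_{i+1}\cdot Q_i^+<0$ of the strict transform on the antiflipping ray, which forces that ray into $V_{i+1}$ and makes the codimension bookkeeping close up cleanly.
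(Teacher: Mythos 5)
Your proposal is correct and follows essentially the same route as the paper: a $(-D)$-MMP with scaling of $-K_X$, made possible by the Mori dream space property from \cite[Corollary 1.3.2]{bchm}, with $\mathbb{Q}$-factoriality and canonical singularities preserved via \cite[Lemma 3.38]{km}, and the codimension bookkeeping of part (3) tracked through divisorial contractions and flips. The only difference is presentational: the paper outsources the scaling step and the entire analysis of $\N(D_i,X_i)$ to \cite[Proposition 2.4 and Lemma 2.6]{cas10}, whereas you reprove these facts directly, correctly identifying the key point in the flip case, namely that $D_{i+1}\cdot Q_i^+<0$ forces the flipped ray into $\N(D_{i+1},X_{i+1})$.
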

According to \cite{cas10}, we call a sequence as above a \textit{special Mori program} for the divisor $-D$.

\begin{proof}
Every Fano variety with canonical singularities is a Mori dream space by \cite[Corollary 1.3.2]{bchm}. Hence \cite[Proposition 1.11(1)]{hk}) implies the existence of a Mori Program as in \eqref{mp} where $X_1, \ldots, X_k$ and $Y$ are $\mathbb{Q}$-factorial, and that there are extremal rays $Q_0,\ldots,Q_k$ satisfying (2a), (2b) and such that $D_i \cdot Q_i>0$ for every $i$. The possibility to choose a Mori program in which we also have $-K_{X_i} \cdot Q_i>0$ for $i=1,\ldots,k$, follows from \cite{bchm} as a special case of Mori program with scaling (see also \cite[Proposition 2.4]{cas10}). Then, by \cite[Lemma 3.38]{km}, $X_1,\ldots,X_k$ have canonical singularities. Finally, (3) is proved in \cite[Lemma 2.6(2)(3)]{cas10}.
\end{proof}

In the rest of the paper we will often consider a variety $X$ as follows:
\textit{\begin{equation}\label{X}
\begin{array}{c}
\textmd{$X$ is a $\mathbb{Q}$-factorial, Gorenstein Fano variety of dimension $n$}\\
\textmd{with canonical singularities and with $\dim(NT(X)) \leq 0$}.
\end{array}
\end{equation}}

Notice once for all that \cite[Corollary 5.18]{km} implies that, if a variety $X$ satisfies \eqref{X} and $n \geq 3$, then $\dim(X_{\sing}) \leq n-3$. This fact will be fundamental in the proof of Theorem \ref{teo_princ}.

We are now going to investigate in detail what happens when we run a special Mori program for $-D$ when the Fano variety $X$ satisfies \eqref{X}. We will focus on the extremal rays $Q_i$ of Theorem \ref{teomp}, such that $Q_i \nsubseteq \N(D_i,X_i)$. Using Theorem \ref{teo}, we will prove that, in our setting, they are all divisorial rays, so that the corresponding birational maps $\sigma_i:X_i\dashrightarrow X_{i+1}$ are in fact divisorial contractions. The situation is similar to the smooth case, where such maps are the blow-up of smooth $(n-2)$-dimensional subvarieties (see \cite[Lemma 2.7(1)]{cas10}).

Set $U_0 := X$ and, for every $i=1,\ldots,k$, call $U_i$ the maximal open set of $X_i$ over which the birational map $\sigma_0^{-1} \circ \cdots \circ \sigma_{i-1}^{-1}:X_i \dashrightarrow X$ is an isomorphism. In particular $U_i$ is contained in the Gorenstein locus of $X_i$. Since $D_{i-1} \cdot Q_{i-1} >0$, $X_i \smallsetminus U_i \subseteq D_i$.

With standard arguments (see \cite[Lemma 3.8]{cas09}), the following lemma can be proved:

\begin{lem}\label{ant_deg} Let $X$ be a Fano variety satisfying \eqref{X}. Fix an index $i \in \{0, \ldots, k-1\}$. Let $C \subset X_i$ be an irreducible curve intersecting $U_i$ and $\tilde{C} \subset X$ be its proper transform. Then
\[
 -K_{X_i} \cdot C \geq -K_X \cdot \tilde{C},
\]
and equality holds if and only if $C \subseteq U_i$.
\end{lem}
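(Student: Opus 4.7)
I would argue by induction on $i$, comparing the anticanonical degree across one step of the Mori program at a time. For $i=0$ we have $C=\tilde C$ and there is nothing to prove. For the inductive step, let $C\subset X_{i+1}$ be irreducible and meeting $U_{i+1}$, and let $C'\subset X_i$ be its proper transform via $\sigma_i^{-1}$. Since $U_{i+1}$ is contained in the isomorphism locus of $\sigma_i$, the curve $C'$ is a well-defined irreducible curve and, via the identification $U_{i+1}\cong\sigma_i^{-1}(U_{i+1})\subseteq U_i$, it meets $U_i$. By the inductive hypothesis applied to $C'$, we have $-K_{X_i}\cdot C'\geq -K_X\cdot\tilde C$ with equality iff $C'\subseteq U_i$, so the only thing left is to show
\[
-K_{X_{i+1}}\cdot C \;\geq\; -K_{X_i}\cdot C',
\]
with equality iff $C'$ avoids $\Exc(\sigma_i)$ (equivalently $C\subseteq U_{i+1}$).

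\textbf{Divisorial step.} If $\sigma_i\colon X_i\to X_{i+1}$ is a divisorial contraction with exceptional divisor $E_i$, I write $K_{X_i}=\sigma_i^*K_{X_{i+1}}+aE_i$; intersecting with a curve generating $Q_i$ and using $-K_{X_i}\cdot Q_i>0$ and $E_i\cdot Q_i<0$ gives $a>0$. Since $C$ meets $U_{i+1}$, it is not contained in $\sigma_i(E_i)$, so $C'\not\subseteq E_i$ and $E_i\cdot C'\geq 0$. Projection formula then yields
\[
-K_{X_i}\cdot C' \;=\; -K_{X_{i+1}}\cdot C \;-\; aE_i\cdot C' \;\leq\; -K_{X_{i+1}}\cdot C,
\]
with equality iff $C'\cap E_i=\emptyset$, as required.

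\textbf{Flip step.} If $\sigma_i$ is the flip of a small $K$-negative contraction, I would pass to a common resolution $p\colon W\to X_i$, $q\colon W\to X_{i+1}$ and write
\[
K_W=p^*K_{X_i}+\sum_j a_jE_j=q^*K_{X_{i+1}}+\sum_j b_jE_j.
\]
By \cite[Lemma 3.38]{km} applied to the flip, $a_j\leq b_j$ for all $j$, with strict inequality exactly for those $E_j$ whose image lies in the flipped locus $\Exc(\phi_i^+)\subset X_{i+1}$. Let $C_W\subset W$ be the strict transform of $C$ (equivalently of $C'$, since $C$ and $C'$ agree away from loci of codimension $\geq 2$). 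Since $C$ meets $U_{i+1}$, in particular $C\not\subseteq\Exc(\phi_i^+)$, so $C_W\not\subseteq E_j$ for any $j$, and hence $E_j\cdot C_W\geq 0$. The projection formula gives
\[
K_{X_i}\cdot C' - K_{X_{i+1}}\cdot C \;=\; \sum_j(b_j-a_j)\,E_j\cdot C_W \;\geq\; 0,
\]
and equality forces $C_W$ to avoid every $E_j$ with $b_j>a_j$, i.e.\ $C$ to avoid $\Exc(\phi_i^+)$.

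\textbf{Combining.} Chaining the two one-step inequalities with the inductive hypothesis yields $-K_{X_{i+1}}\cdot C\geq -K_X\cdot\tilde C$; equality forces equality at every step, which unwinds exactly to $C\subseteq U_{i+1}$. The only real obstacle is keeping the isomorphism loci $U_j$ consistent across the small modifications (verifying that $U_{i+1}\subseteq\sigma_i(U_i\cap V_i)$, where $V_i$ is the isomorphism locus of $\sigma_i$), so that the induction on $C'$ is legitimate; the calculation itself is a routine discrepancy computation once the right resolution is chosen.
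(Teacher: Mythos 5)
Your strategy is exactly the intended one: the paper gives no proof here, referring instead to the ``standard arguments'' of \cite[Lemma 3.8]{cas09}, and those arguments are precisely your step-by-step comparison via a common resolution and \cite[Lemma 3.38]{km}. The base case, the divisorial step (including its equality criterion), and the inequality in the flip step are all correct as written.

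There is, however, a genuine gap in the equality criterion of the flip step. You deduce from $\sum_j(b_j-a_j)E_j\cdot C_W=0$ that $C_W$ avoids every $E_j$ with $b_j>a_j$, and then assert that this is equivalent to ``$C$ avoids $\Exc(\varphi_i^+)$.'' The implication you need is that every point of $q^{-1}(\Exc(\varphi_i^+))$ lies on some $E_j$ with $b_j>a_j$, and this does \emph{not} follow from \cite[Lemma 3.38]{km}, which only tells you that the $E_j$ with $b_j>a_j$ are exactly those whose center lies in the flipped locus --- not that they cover its whole preimage. A priori $q$ could be a local isomorphism at a point of $q^{-1}(x)$ for $x\in\Exc(\varphi_i^+)$ (e.g.\ along the strict transform of the flipped locus), and $C_W$ could pass through $q^{-1}(x)$ only at such points, giving $\Delta\cdot C_W=0$ with $C\cap\Exc(\varphi_i^+)\neq\emptyset$, which would break the ``only if'' direction. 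To close this, set $\Delta:=p^*K_{X_i}-q^*K_{X_{i+1}}=\sum_j(b_j-a_j)E_j\geq 0$ and observe that $-\Delta$ is $q$-nef (for $\gamma$ contracted by $q$ one has $-\Delta\cdot\gamma=-K_{X_i}\cdot p_*\gamma\geq 0$, since $p_*\gamma$ is contracted by the flipping contraction $\varphi_i$). The negativity lemma \cite[Lemma 3.39(2)]{km} then says that each fiber $q^{-1}(x)$ is either contained in $\Supp\Delta$ or disjoint from it; and for $x\in\Exc(\varphi_i^+)$ the fiber does meet $\Supp\Delta$, because $x$ lies on a flipping curve $\ell$ with $K_{X_{i+1}}\cdot\ell>0$, whose strict transform $\ell_W$ satisfies $\Delta\cdot\ell_W=K_{X_i}\cdot p_*\ell_W-K_{X_{i+1}}\cdot\ell<0$ and hence lies in $\Supp\Delta$. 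Thus $q^{-1}(\Exc(\varphi_i^+))\subseteq\Supp\Delta$, and since $C_W$ surjects onto $C$, the curve $C_W$ must meet $\Supp\Delta$ whenever $C$ meets $\Exc(\varphi_i^+)$, which is what your equality analysis requires. The remaining point you flag yourself --- that $U_{i+1}$ coincides with the image of $U_i$ minus the exceptional locus of the current step --- is indeed needed to translate ``equality at every step'' into ``$C\subseteq U_i$,'' and holds because the maps $X_j\dashrightarrow X$ extract divisors rather than contract them, so no later step can restore an isomorphism over a point already destroyed.
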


\begin{lem}\label{teomp2} Let $X$ be a Fano variety satisfying \eqref{X}. In the setting of Theorem \ref{teomp}, set $\{i_1,\ldots,i_s\}:=\{i \in \{0,\ldots,k-1\} \ | \ Q_i \nsubseteq\N(D_i,X_i)\}$. Then:
\begin{itemize}
 \item $s \in \{\codim\N(D,X), \codim\N(D,X)-1\}$. If $s=\codim \N(D,X)$, then $\N(D_k,X_k)=\N(X_k)$; if $s=\codim\N(D,X)-1$, then $\N(D_k,X_k)$ has codimension one in $\N(X_k)$ and $Q_k \nsubseteq \N(D_k,X_k)$;
 \item for every $j=1,\ldots,s$, the map $\sigma_{i_j}$ is a divisorial contraction with fibers of dimension $\leq 1$. Its non-trivial fibers are all irreducible, without multiple one-dimensional components and with reduced structure isomorphic to $\mathbb{P}^1$. Moreover, the general fiber of $\sigma_{i_j}$ is smooth;
 \item for every $j=1,\ldots,s$, let $E_j \subset X$ be the transform of $\Exc(\sigma_{i_j})$. Then $\sigma_{i_j}\circ \cdots \circ \sigma_0:X \dashrightarrow X_{i_j +1}$ is regular (and is an isomorphism) on $E_j$;
 \item $E_1,\ldots,E_s \subset X$ are pairwise disjoint prime divisors;
 \item for every $j=1,\ldots,s$, let $f_j \subset X$ be the transform of a non-trivial fiber of $\sigma_{i_j}$. Then
\[
 [f_j] \notin \N(D,X), \ \ D \cdot f_j>0, \ \ \textmd{and} \ \ E_j \cdot f_j = K_X \cdot f_j=-1.
\]
\end{itemize}
\end{lem}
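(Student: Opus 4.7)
\medskip
\noindent\textbf{Plan.} The first bullet is pure bookkeeping from Theorem~\ref{teomp}(3): writing $c_i=\codim\N(D_i,X_i)$, the recurrence shows that $c_i$ decreases by one exactly at the steps $i\in\{i_1,\dots,i_s\}$ and is unchanged otherwise, so $s=c_0-c_k$, and $c_k\in\{0,1\}$ according to whether $Q_k\subseteq\N(D_k,X_k)$ or not. This gives both the claimed dichotomy for $s$ and the two quoted consequences on $\N(D_k,X_k)$.

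For the remaining assertions, my plan is to fix $j$ and apply Theorem~\ref{teo} to the elementary $K$-negative contraction $\varphi_{i_j}\colon X_{i_j}\to Z_{i_j}$ of $Q_{i_j}$. To do so I must verify three hypotheses on $X_{i_j}$: that it has canonical singularities with $\dim NT(X_{i_j})\le 0$, that $\varphi_{i_j}$ has fibers of dimension $\le 1$, and that $\Exc(\varphi_{i_j})$ lies in the Gorenstein locus. Canonicity is already given by Theorem~\ref{teomp}(1); to control $NT(X_{i_j})$ I would induct on $i$, using \cite[Lemma~3.38]{km} (monotonicity of discrepancies along a $K$-negative program) together with $X_i\smallsetminus U_i\subseteq D_i$ and the bound $\dim X_{\sing}\le n-3$ that holds under \eqref{X} by \cite[Corollary~5.18]{km}. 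The Gorenstein-locus containment I would extract from the hypothesis $Q_{i_j}\not\subseteq\N(D_{i_j},X_{i_j})$: every curve with class in $Q_{i_j}$ cannot be contained in $D_{i_j}$, hence meets $U_{i_j}\subseteq X_{i_j,\mathrm{Gor}}$; this has to be promoted from individual curves to the whole $\Exc(\varphi_{i_j})=\Locus(Q_{i_j})$. For the fiber-dimension bound I would combine Lemma~\ref{ishii} (giving $-K_{X_{i_j}}\cdot F_0\le 1$ for any fiber component meeting the Gorenstein locus) with the scaling property underlying the special Mori program of Theorem~\ref{teomp}.

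Once Theorem~\ref{teo} applies to $\varphi_{i_j}$, the description of the fibers and the divisoriality of $\sigma_{i_j}$ are immediate, and the equalities $E_j\cdot f_j=K_X\cdot f_j=-1$ follow from their analogues $E'_{i_j}\cdot f=K_{X_{i_j}}\cdot f=-1$ on $X_{i_j}$ by transfer through Lemma~\ref{ant_deg}, using that $f_j$ meets $U_{i_j}$ and is there the strict transform of $f$. The positivity $D\cdot f_j>0$ is inherited from $D_{i_j}\cdot Q_{i_j}>0$, and $[f_j]\notin\N(D,X)$ from $Q_{i_j}\not\subseteq\N(D_{i_j},X_{i_j})$ via the compatibility of push-forwards of one-cycles along the proper transforms. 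The regularity of $\sigma_{i_j}\circ\cdots\circ\sigma_0$ on $E_j$ and the pairwise disjointness of $E_1,\dots,E_s$ I would prove by induction on $j$: each earlier $\sigma_i$ is either a flip, hence an isomorphism in codimension one and thus inert on the divisor $E_j$, or a divisorial contraction whose exceptional divisor lies inside $D_i$ and is therefore disjoint from the transform of $E_j$, because the latter meets $U_i$.

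The hard part, in my view, will be the inductive control of $\dim NT(X_{i_j})\le 0$ together with the upgrade from ``curves in $Q_{i_j}$ meet $U_{i_j}$'' to ``$\Exc(\varphi_{i_j})$ is contained in the Gorenstein locus of $X_{i_j}$'': the danger is that a previous divisorial step of the program could introduce a positive-dimensional non-terminal (or non-Gorenstein) locus along the image of a contracted divisor, and one has to play $X_i\smallsetminus U_i\subseteq D_i$, $\dim X_{i,\sing}\le n-3$, and the monotonicity of discrepancies carefully against this potential degeneration of singularities along the Mori program.
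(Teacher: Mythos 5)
Your treatment of the first bullet (the bookkeeping via Theorem~\ref{teomp}(3)) and your overall strategy --- verify the hypotheses of Theorem~\ref{teo} for the contraction $\varphi_{i_j}$ of $Q_{i_j}$ and then read off the conclusions --- match the paper. But there are two genuine gaps at exactly the points you flag as delicate. First, your route to the fiber-dimension bound is circular: Lemma~\ref{ishii} \emph{assumes} that the fibers of the contraction are at most one-dimensional, so it cannot be used to prove that bound, and the ``scaling property'' plays no role here. The correct argument is elementary: since $Q_{i_j}\nsubseteq\N(D_{i_j},X_{i_j})$, no contracted curve lies in $D_{i_j}$, so $\varphi_{i_j}$ is finite on $D_{i_j}$; combined with $D_{i_j}\cdot Q_{i_j}>0$ (every fiber meets $D_{i_j}$), a fiber of dimension $\ge 2$ would force a contracted curve inside $D_{i_j}\cap F$, a contradiction. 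This also gives for free that every fiber component $F_0$ meets $X_{i_j}\smallsetminus D_{i_j}\subseteq U_{i_j}$.

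Second, the ``promotion'' from ``$F_0$ meets $U_{i_j}$'' to ``$F_0\subseteq U_{i_j}$'' (hence $\Exc(\varphi_{i_j})$ lies in the Gorenstein locus, the birational map back to $X$ is an isomorphism there, and all the singularity worries about $NT(X_{i_j})$ become irrelevant near the exceptional locus) is the heart of the proof, and you leave it unresolved. The paper's mechanism is a numerical sandwich that you never assemble: since $X$ is Gorenstein Fano, $-K_X\cdot\tilde F_0\ge 1$ for the proper transform $\tilde F_0\subset X$; Lemma~\ref{ant_deg} gives $-K_X\cdot\tilde F_0\le -K_{X_{i_j}}\cdot F_0$; and Lemma~\ref{ishii} (now legitimately applicable, as the fibers are one-dimensional and $F_0$ meets the Gorenstein open set $U_{i_j}$) gives $-K_{X_{i_j}}\cdot F_0\le 1$. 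Hence all inequalities are equalities, and the equality case of Lemma~\ref{ant_deg} forces $F_0\subseteq U_{i_j}$. Once this is in place, the regularity of $\sigma_{i_j}\circ\cdots\circ\sigma_0$ on $E_j$, the disjointness of the $E_j$'s, and the intersection numbers $E_j\cdot f_j=K_X\cdot f_j=-1$ all follow directly from Theorem~\ref{teo} transported through the isomorphism over $U_{i_j}$; your proposed step-by-step induction over the earlier $\sigma_i$'s is unnecessary and, without the containment $\Exc(\varphi_{i_j})\subseteq U_{i_j}$ already established, would not obviously close.
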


\begin{proof}
The first assertion follows from Theorem \ref{teomp}(3).

Fix $j \in \{1,\ldots,s\}$ and let $\varphi_{i_j}$ be the contraction of the extremal ray $Q_{i_j}$. There are two possibilities: either $\varphi_{i_j}$ is divisorial and $\varphi_{i_j}=\sigma_{i_j}$, or $\varphi_{i_j}$ is small and $\sigma_{i_j}$ is its flip.

Let $F$ be a non-trivial fiber of $\varphi_{i_j}$ and $F_0$ one of its irreducible components. Since $Q_{i_j} \nsubseteq \N(D_{i_j},X_{i_j})$ and $D_{i_j} \cdot Q_{i_j} >0$, $\varphi_{i_j}$ is finite on $D_{i_j}$. In particular $\dim(F_0)=1$ and $F_0$ intersects $U_{i_j}$, which is made up by Gorenstein points. By Lemmas \ref{ant_deg} and \ref{ishii}, we get:
\[
 1 \leq -K_X \cdot \tilde{F}_0 \leq -K_{X_{i_j}} \cdot F_0 \leq 1,
\]
where $\tilde{F}_0 \subset X$ is the proper transform of $F_0$. Hence $-K_X \cdot \tilde{F}_0 = -K_{X_{i_j}} \cdot F_0 = 1$ and Lemma \ref{ant_deg} assures that $F_0 \subseteq U_{i_j}$. Thus $\Exc(\varphi_{i_j}) \subseteq U_{i_j}$ and $\sigma_{i_j} \circ \cdots \circ \sigma_0$ is regular and is an isomorphism on $E_j$. Moreover, $X_{i_j}$ has canonical singularities and $\dim(NT(X_{i_j}))\leq n-3$ by \cite[Lemma 3.38]{km}. Since $U_{i_j}$ is contained in the Gorenstein locus of $X_{i_j}$ and the fibers of $\varphi_{i_j}$ are at most one-dimensional, we can apply Theorem \ref{teo} and we see that $\varphi_{i_j}$ is a divisorial contraction; in particular it coincides with $\sigma_{i_j}$. All the statements now follow. 
\end{proof}

\subsection{Picard number of divisors in Fano varieties}

Given a Fano variety $X$, possibly singular, it makes sense to consider the following invariant:
\[
 c_X=\max\{\codim \N(D,X) \ | \ D \textmd{ is a prime divisor in } X\}.
\]
This invariant was introduced in \cite{cas10} in the smooth case, where the author proved that it is always $\leq 8$ (see Theorem \ref{cinzia}). We will use the same invariant to prove Theorem \ref{teo_princ}.

In Proposition \ref{prop} we study what happens when we run a special Mori program for $-D$ when $c_X \geq 4$ and $D$ is a prime divisor with $\codim \N(D,X)=c_X$.

Let us first recall a preliminary result concerning projective bundles. We need to state it in the analytic setting; for simplicity, if $X$ is an algebraic variety, we still denote by $X$ the corresponding analytic variety.

\begin{rmk}\label{trivial}
Let $X$ and $Y$ be analytic varieties. Let $p:X \rightarrow Y$ be a holomorphic $\mathbb{P}^n$-bundle. Suppose that there exist $n+2$ sections $s_0, \ldots, s_{n+1}$ of $p$ such that $s_0(y), \ldots, s_{n+1}(y)$ are projectively indipendent for every $y \in Y$. Then $p$ is the trivial $\mathbb{P}^n$-bundle over $Y$ and for every $i=0,\ldots,n+1$, $s_i(Y)$ is of the type $\{pt\}\times Y$.

This is an elementary fact, for which we could not find a reference. The point is that $s_0,\ldots,s_{n+1}$ can be locally lifted to holomorphic functions to $\mathbb{C}^{n+1}$ which are uniquely determined up to a common holomorphic multiple, as basic linear algebra arguments show. In terms of the vector bundle corresponding to $p$, this means that any two trivializations differ, in the intersections, by the multiplication with a non-vanishing holomorphic function. Thus the images of these trivializations in the projective space glue together giving a global holomorphic trivialization for $p$.
\end{rmk}

\begin{pro}\label{prop} Let $X$ be a Fano variety which satisfies  \eqref{X} and such that $c_X \geq 4$. Let $D$ be a prime divisor of $X$ with $\codim\N(D,X)=c_X$ and let $E_1, \ldots, E_s$, $f_1,\ldots,f_s$ be as in Lemma \ref{teomp2}. Then the vector space
\[
 L:=\N(D \cap E_i,X) \subseteq \N(X)
\]
has codimension $c_X+1$ and does not depend on $i \in \{i,\ldots,s\}$. Moreover for every $i = 1, \ldots,s$:
\begin{enumerate}
 \item[(1)] $L=\N(D,X) \cap E_i^{\perp}=\N(E_i,X) \cap E_i^{\perp}$;
 \item[(2)] $\codim\N(E_i,X)=c_X$;
 \item[(3)] $\N(E_i,X) = \N(D \cap E_i,X) \oplus \mathbb{R}[f_i]$;
 \item[(4)] $R_i:=\mathbb{R}_{\geq 0}[f_i]$ is an extremal ray of type $(n-1,n-2)^{eq}$ and, if $\varphi_i: X \rightarrow Y_i$ is its contraction, then $Y_i$ is Fano.
\end{enumerate}
If furthermore there exists an extremal ray $R_0$ of type $(n-1,n-2)^{eq}$ such that $D=\Locus(R_0)$ and $E_i \cdot R_0 > 0$ for every $i=1,\ldots,s$, then:
\begin{enumerate}
 \item[(5)] there exists a special Mori program for $-E_i$ such that $D$ is one of the prime divisors it determines (in the sense of Lemma \ref{teomp2});
 \item[(6)] set $E_0:=D$. For every $i =0,\ldots,s$, there exists an $(n-2)$-dimensional variety $T_i$ and finite morphisms $T_i \rightarrow \varphi_i(E_i)$ and $h_i: \mathbb{P}^1 \times T_i \rightarrow E_i$ making the following diagram commute:
\[
\xymatrix{
\mathbb{P}^1 \times T_i \ar[r]^{\hspace{.3cm}h_i} \ar[d] & E_i \ar[d]^{\varphi_{i|E_i}}\\
T_i \ar[r] & \varphi_i(E_i)
}
\]
where $\mathbb{P}^1 \times T_i \rightarrow T_i$ is the projection map. Moreover
\[
 (j_i \circ h_i)_{*} \N(\{pt\} \times T_i) = L,
\]
where $j_i:E_i \hookrightarrow X$ is the inclusion map;
 \item[(7)] for every $i=0,\ldots,s$, $R_i$ is the unique extremal ray of $X$ having negative intersection with $E_i$.
\end{enumerate}

\end{pro}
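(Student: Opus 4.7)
The plan is to follow the strategy of Casagrande's proof in the smooth case \cite[Proposition 3.3]{cas10}, substituting Theorem \ref{teo} for Ando's lemma (Lemma \ref{lem_ando}) to handle the singularities.

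\textbf{Parts (1)--(4).} Fix $i \in \{1, \ldots, s\}$. By Lemma \ref{teomp2}, $E_i$ is identified via the Mori program with $E_i' := \Exc(\sigma_{i_j}) \subset X_{i_j}$. Theorem \ref{teo} applied to $\sigma_{i_j}$ gives that, outside a closed subset of $X_{i_j+1}$ of codimension $\geq 3$ (invisible to $1$-cycles), $\sigma_{i_j}$ restricts to a $\mathbb{P}^1$-bundle $E_i' \to B_i' := \sigma_{i_j}(E_i')$. Since $D \cdot f_i > 0$, the divisor $D \cap E_i$ dominates $B_i'$, hence $\N(E_i, X) = \N(D \cap E_i, X) + \mathbb{R}[f_i]$; the sum is direct since $[f_i] \notin \N(D, X)$ by Lemma \ref{teomp2}, which proves (3). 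The contraction $\varphi_i$ of $R_i$ coincides with $\sigma_{i_j}$ after the identification, so (4) follows from Theorem \ref{teo}, with $Y_i$ Fano via the formula $K_X = \varphi_i^* K_{Y_i} + E_i$.

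For (1), since $[f_i] \cdot E_i = -1 \neq 0$, (3) gives $\dim(\N(E_i, X) \cap E_i^\perp) = \dim L$, so it suffices to verify $L \subseteq E_i^\perp$; this is obtained by a projection-formula computation using the $\mathbb{P}^1$-bundle structure of $E_i$ together with the identity $K_X = \varphi_i^* K_{Y_i} + E_i$. The equality $L = \N(D, X) \cap E_i^\perp$ then follows by noting the inclusion $\N(D, X) \cap E_i^\perp \subseteq \N(E_i, X)$, obtained by pushforward through $\varphi_i$ together with $[f_i] \notin \N(D, X)$. For (2), the linear functional $\alpha \mapsto \alpha \cdot E_i$ on $\N(D, X)$ is nonzero since a general curve in $D$ meets $E_i$ properly at a smooth point, so $\dim \N(D, X)/L = 1$, giving $\codim L = c_X + 1$; (3) then yields $\codim \N(E_i, X) = c_X$. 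Independence of $L$ on $i$ follows from the pairwise disjointness $E_i \cap E_j = \emptyset$ for $i \neq j$: any class in $L = \N(D \cap E_i, X)$ is supported in $E_i$, hence has zero intersection with $E_j$ when $j \neq i$; combined with $L \subseteq E_i^\perp$, this yields $L \subseteq \bigcap_j E_j^\perp$ and hence, applying (1) for each $j$, all the $\N(D \cap E_j, X)$ coincide.

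\textbf{Parts (5)--(7).} Under the additional hypothesis, the contraction of $R_0$ is divisorial with $\Locus R_0 = D$ and $E_i \cdot R_0 > 0$ for every $i$, so it qualifies as the first step of a special Mori program for $-E_i$; continuing the program and repeating the analysis of (1)--(4) with the roles of $D$ and $E_i$ interchanged exhibits $D$ as one of the $E_j$'s it determines, proving (5). For (6), starting from the $\mathbb{P}^1$-bundle $\varphi_i \colon E_i \to \varphi_i(E_i)$, the multi-sections obtained from $D \cap E_i$ and from the other exceptional divisors produced by the Mori program for $-E_i$ are pairwise disjoint; since there are at least $c_X - 1 \geq 3$ of them, a suitable finite cover $T_i \to \varphi_i(E_i)$ turns them into $\geq 3$ disjoint sections in general position of the pullback $\mathbb{P}^1$-bundle, so Remark \ref{trivial} trivializes it, yielding $h_i \colon \mathbb{P}^1 \times T_i \to E_i$ with the required diagram and $(j_i \circ h_i)_* \N(\{pt\} \times T_i) = L$. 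For (7), any extremal ray $R$ with $E_i \cdot R < 0$ forces $\Locus R \subseteq E_i$, hence $R \subseteq \N(E_i, X) = L \oplus \mathbb{R}[f_i]$; since $L \subseteq E_i^\perp$, a nontrivial $[f_i]$-component is required, and extremality forces $R = R_i$. The main obstacles are the projection-formula computation establishing $L \subseteq E_i^\perp$ and the construction in (6) of $T_i$ with enough disjoint sections to apply Remark \ref{trivial}.
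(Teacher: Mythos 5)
Your overall architecture matches the paper's, but there is a genuine gap at the crux of parts (1)--(3): the claim that $L=\N(D\cap E_i,X)\subseteq E_i^{\perp}$ follows from ``a projection-formula computation using the $\mathbb{P}^1$-bundle structure of $E_i$ together with $K_X=\varphi_i^*K_{Y_i}+E_i$''. No such computation exists: for a multisection $C\subseteq D\cap E_i$ of the ruling, the value $E_i\cdot C$ is not determined by the bundle structure (already for the exceptional divisor $E\to B$ of the blow-up of a threefold along a curve, different sections of the ruled surface give different values of $E\cdot C$). Likewise your inclusion $\N(D,X)\cap E_i^{\perp}\subseteq\N(E_i,X)$ ``by pushforward through $\varphi_i$'' is unjustified: a class in $\N(D,X)\cap E_i^{\perp}$ is a combination of curves in $D$ whose total intersection with $E_i$ vanishes, and there is no reason it is represented by cycles inside $E_i$. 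The paper obtains both facts at once by a codimension sandwich that uses exactly the two hypotheses you never invoke in (1)--(3): the maximality of $c_X$ over \emph{all} prime divisors (applied to the prime divisor $E_i$, giving $\codim\N(E_i,X)\leq c_X$) and $c_X\geq 4$ (giving $s\geq 3$, hence three pairwise disjoint divisors $E_i$, $E_j$, $E_l$). Disjointness gives $\N(D\cap E_i,X)\subseteq E_j^{\perp}$ for $j\neq i$, and the chain $c_X+1\geq\codim\N(E_i,X)+1\geq\codim\N(D\cap E_i,X)\geq\codim(\N(D,X)\cap E_j^{\perp})=c_X+1$ forces equality everywhere; varying $j$ and $l$ then yields (1)--(3) and the independence of $L$ on $i$. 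That your argument nowhere uses these hypotheses is the tell-tale sign it cannot be repaired as written.

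A secondary gap occurs in (4): you assert that $\varphi_i$ ``coincides with $\sigma_{i_j}$ after the identification'', but $R_i=\mathbb{R}_{\geq 0}[f_i]$ must first be shown to be an extremal ray of $\NE(X)$ itself (it is only known to be extremal on $X_{i_j}$). The paper does this by proving that $-K_X+E_i$ is nef with $(-K_X+E_i)^{\perp}\cap\NE(X)=R_i$, and that nefness computation relies on $L\subseteq E_i^{\perp}$ from (1)--(3), so (4) cannot be detached from the missing argument above. Your sketches of (5)--(7) do follow the paper's route (a Mori program for $-E_i$ beginning with $R_0$; disjoint multisections turned into sections by base change and then Remark \ref{trivial}; and for (7) the cone decomposition coming from (6)), though in (7) you should use the cone inclusion $\overline{\NE}(E_i,X)\subseteq(L\cap\NE(X))+R_i$ furnished by (6) rather than only the vector-space decomposition $\N(E_i,X)=L\oplus\mathbb{R}[f_i]$, since the latter alone does not let extremality single out $R_i$.
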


\begin{proof}
Let 
\[
X=X_0 \stackrel{\sigma_0}{\dashrightarrow} X_1 \dashrightarrow \cdots \dashrightarrow X_{k-1} \stackrel{\sigma_{k-1}}{\dashrightarrow}X_k \xrightarrow{\psi} Y
\]
be the special Mori program for $-D$ giving rise to the prime divisors $E_1,\ldots,E_s$. By Lemma \ref{teomp2}, we know that $s \in \{c_X,c_X-1\}$; in particular $s \geq 3$ and we can take three distinct indices $i,j,l \in \{1,\ldots,s\}$. Let $m_i \in \{0,\ldots,k-1\}$ be such that $E_i \subset X$ is the transform of $\Exc(\sigma_{m_i})$. Since $D \cdot f_i >0$, $D \cap E_i$ dominates the image of $E_i$ under the composition $\sigma_{m_i} \circ \cdots \circ \sigma_0:X \dashrightarrow X_{m_i+1}$, which is regular on $E_i$ (Lemma \ref{teomp2}). Then
\begin{equation}\label{-}
 \N(E_i,X)=\N(D \cap E_i,X) + \mathbb{R}[f_i].
\end{equation}
Recall that $E_i \cap E_j = \emptyset$, so that $\N(D \cap E_i,X) \subseteq E_j^{\perp}$. Thus
\begin{equation}\label{--}
 c_X + 1 \geq \codim\N(E_i,X) + 1\geq \codim\N(D \cap E_i,X) \geq \codim(\N(D,X) \cap E_j^{\perp})=c_X+1, 
\end{equation}
where the first inequality follows from the definition of $c_X$ and the last equality holds because $D$ intersects $E_j$ and hence $\N(D,X) \nsubseteq E_j^{\perp}$. In particular from \eqref{--} we have $\N(D \cap E_i,X)=\N(D,X) \cap E_j^{\perp}$. Repeating the same reasoning for the pair of disjoint prime divisors $E_j$ and $E_l$, we get:
\begin{equation}\label{---}
 \N(D \cap E_i,X)=\N(D,X) \cap E_j^{\perp}= \N(D \cap E_l,X).
\end{equation}
Statements (1), (2) and (3) follow from \eqref{-}, \eqref{--} and \eqref{---} letting $i$, $j$ and $l$ vary in $\{1,\ldots,s\}$. Just notice that the last equality in (1) holds because $L = \N(D \cap E_i,X)=\N(D,X) \cap E_i^{\perp} \subseteq \N(E_i,X) \cap E_i^{\perp}$; but $E_i \cdot f_i <0$, so that $\N(E_i,X) \nsubseteq E_i^{\perp}$, and hence $L=\N(E_i,X) \cap E_i^{\perp}$ for dimensional reasons.
\medskip

In order to prove (4), let us first show that $-K_X + E_i$ is a nef divisor and $(-K_X + E_i)^{\perp} \cap \NE(X)=R_i$, so that $R_i$ is an extremal ray. To see this, let $C \subset X$ be an irreducible curve. If $C \nsubseteq E_i$, then clearly $(-K_X + E_i) >0$; the same holds if $C \subseteq D \cap E_i$, because we have just proved that $\N(D \cap E_i,X) \subseteq E_i^{\perp}$. Assume now that $C \subseteq E_i$. By \cite[Lemma 3.2 and Remark 3.3]{occ06}, we have $C\equiv \lambda f_i + \mu C'$, for a curve $C'\subseteq D \cap E_i$ and real coefficients $\lambda$ and $\mu$, with $\mu \geq 0$. Since $(-K_X + E_i) \cdot f_i=0$, we have:
\[
(-K_X+E_i) \cdot C = \mu (-K_X + E_i) \cdot C' \geq 0,
\]
and equality holds if and only if $[C] \in R_i$.

Let $\varphi_i:X \rightarrow Y_i$ be the contraction of the extremal ray $R_i$; it is clear that $\Exc(\varphi_i)=E_i$. Moreover, by Lemma \ref{teomp2}, $\varphi_i$ is of type $(n-1,n-2)^{eq}$; in particular, by Theorem \ref{teo}(4),
\[
 K_X \sim \varphi^{*}(K_{Y_i}) + E_i,
\]
hence $-K_{Y_i}$ has a multiple which is Cartier and ample.
\medskip

Let us now suppose $D=\Locus(R_0)$ with $R_0$ an extremal ray of type $(n-1,n-2)^{eq}$. Let $\varphi_0:X \rightarrow Y$ be the corresponding contraction. Since $E_i \neq D$, $\varphi_0(E_i) \subset Y$ is a prime divisor. Being $Y$ Fano, by Lemma \ref{teomp}, there exists a special Mori program for $-\varphi_0(E_i) \subset Y$. Together with $\varphi_0$, this gives a special Mori program for $-E_i$ where the first extremal ray is $R_0$:
\[
 X \xrightarrow{\varphi_0} Y \dashrightarrow Z_1 \dashrightarrow \cdots \dashrightarrow Z_{k-1} \dashrightarrow Z_k.
\]
Notice that $R_0 \nsubseteq \N(E_i,X)$, otherwise, by (1), it would belong to $\N(D \cap E_i,X) \subset E_i^{\perp}$ and this is impossible since $E_i \cdot R_0 >0$ by assumption. Statement (5) is thus proved.
\medskip

The proof of (6) requires some work. Define $S_i:=\varphi_i(E_i) \subset Y_i$ for $i=0,\ldots,s$. Let us first show that for every $i=0,\ldots,s$, there exist three pairwise disjoint $(n-2)$-dimensional subvarieties $F_{i}^{1}$, $F_{i}^2$, $F_{i}^3 \subset E_i$ such that $\varphi_{i|F_{i}^{j}}: F_{i}^{j} \rightarrow S_i$ is finite for $j=1,2,3$. Let us examine separately the cases $i=0$ and $i \in \{1,\ldots,s\}$. Suppose first $i=0$ and define
\[
 F_{0}^1=E_0 \cap E_1, \ \ \ \ F_{0}^2=E_0 \cap E_2, \ \ \ \ F_{0}^3=E_0 \cap E_3.
\]
Since for every $j=1,2,3$, $E_j \cdot R_0>0$, we have $R_0 \nsubseteq \N(E_0 \cap E_j,X)$. Hence the restriction of $\varphi_0$ to $F_{0}^j$ is finite.

Suppose now $i \in \{1,\ldots,s\}$. Running a special Mori program for $-E_i$, we get, as in Lemma \ref{teomp2}, $s_i$ pairwise disjoint divisors $E_{i}^{1}, \ldots, E_{i}^{s_i}$. Since $\codim \N(E_i,X)=c_X\geq 4$ by (2), we have $s_i \geq 3$. According to (5), we can suppose $E_i^1=E_0$. Moreover, since $E_0 \cdot R_i >0$, we get $E_i^j \cdot R_i>0$ for every $j=2,\ldots,s_i$. Indeed, if $E_i^j \cdot R_i=0$ for some $j$, there must be a curve $f_i$ such that $[f_i] \in R_i$ and $f_i \subset E_i^j$ (because $E_i \cap E_i^j \neq \emptyset$). But then $E_0 \cdot f_i=0$, because $E_0 \cap E_i^j = \emptyset$.

Define
\[
 F^1_i:=E_i \cap E_0, \ \ \ \ F^2_i:=E_i \cap E_i^2, \ \ \ \ F^3_i:=E_i \cap E_i^3.
\]
As above, since $E_0 \cdot R_i$, $E_i^2 \cdot R_i$ and $E_i^3 \cdot R_i$ are all positive, we see that the restrictions of $\varphi_i$ to $F^1_i$, $F^2_i$ and $F^3_i$ are finite.

Notice that, in both the cases $i=0$ and $i>0$, $F^{1}_i$ is of the form $E_0 \cap E_l$ for some $l \in \{1,\ldots,s\}$.
\medskip

Fix $i\in\{0,\ldots,s\}$ and consider the normalization $\nu: \tilde{S} \rightarrow S_i$ of $S_i$. Let us first prove that there exists a holomorphic $\mathbb{P}^1$-bundle $p:\tilde{E} \to \tilde{S}$ and a finite morphism $\mu:\tilde{E} \to E_i$ making the following diagram commute
\[
 \xymatrix{
\tilde{E} \ar[r]^{\mu}  \ar[d]^{p}& E_i \ar[d]^{\varphi_i}\\
\tilde{S} \ar[r]^{\nu} & S_i
}
\]
where, for simplicity, we still denote with $\varphi_i$ its restriction to $E_i$.

Fix a smooth non-trivial fiber $f_i \simeq \mathbb{P}^1$ of $\varphi_i$ and call $\hat{S}$ the connected component of $\Hilb(X)$ containing the point which corresponds to $f_i$. Then, by Lemma \ref{lem_ando}, $\dim(\hat{S})=n-2$ and there is a birational map $\xi:S_i \dashrightarrow \hat{S}$. Let $\hat{S} \times X \supset \hat{E}\xrightarrow{\alpha} \hat{S}$ be the restriction to $\hat{S}$ of the universal family over $\Hilb(X)$, and call $\beta: \hat{E} \rightarrow X$ the other projection. Since cycles in the same connected component of $\Hilb(X)$ are numerically equivalent, every point of $\hat{S}$ represents a scheme contracted by $\varphi_i$ and $\beta(\hat{E})=E_i$. We have the diagram:

\begin{equation}\label{diag}
\xymatrix{
\hat{E} \ar[r]^{\beta} \ar[d]^{\alpha} & E_i \ar[d]^{\varphi_i}\\
\hat{S} & S_i \ar@{-->}[l]_{\xi}.
}
\end{equation}
Choose a point $z \in \hat{S}$; let $Z \subset X$ be the closed subscheme such that $z=[Z]$ and denote by $\Gamma$ the corresponding one-cycle. By Theorem \ref{teo}(1), we know that $\Gamma \simeq \mathbb{P}^1$ and that, if we look at $\Gamma$ as a subscheme of $Z$, its ideal sheaf $\mathcal{I}_{\Gamma}$ is a skyscraper sheaf. We can write:
\[
 \Gamma = \Spec \frac{\mathcal{O}_{Z}}{\mathcal{I}_{\Gamma}}.
\]
By the flatness of $\alpha$, we have: $\chi(\mathcal{O}_{Z})=\chi(\mathcal{O}_{f_i})=1$. Therefore $\chi(\mathcal{I}_{\Gamma})=0$, which implies $\mathcal{I}_{\Gamma}=0$ and $\Gamma \simeq Z$. We have thus shown that every fiber of $\alpha$ is isomorphic to $\mathbb{P}^1$.

Consider now the rational map $\psi:=\xi \circ \nu: \tilde{S} \dashrightarrow \hat{S}$ and let us prove that it is in fact a morphism. Fix a point $y \in \tilde{S}$ such that $\xi$ is not defined at $\nu(y)$. Let $C \subset \tilde{S}$ be a curve passing through $y$ and such that $\nu(C \smallsetminus \{y\})$ intersects the domain of $\xi$. Eventually composing with its normalization, we can suppose $C$ to be smooth, so that the restriction of $\psi$ to $C$ can be extended to $y$. By the commutativity of \eqref{diag}, we see that the only possibility is $\psi_{|C}(y)=[(\varphi_i^{-1}(\nu(y)))_{\red}]$. In particular this point does not depend on the curve $C$ and is the total transform of $y$ through $\psi$. Then, by Zariski main theorem, $\psi$ is regular at $y$.

We can finally set $\tilde{E}:=\tilde{S} \times_{\hat{S}} \hat{E}$ and we get
\[
 \xymatrix{
\tilde{E} \ar@/^1pc/[rr]^{\mu}  \ar[d]^{p} \ar[r] & \hat{E} \ar[r]_{\beta} \ar[d]^{\alpha} & E_i \ar[d]^{\varphi_i}\\
\tilde{S} \ar[r]^{\psi} \ar@/_1pc/[rr]_{\nu} & \hat{S} & S_i \ar@{-->}[l]_{\xi}
}
\]
where $p:\tilde{E} \rightarrow \tilde{S}$ is flat. Moreover the fibers over every (closed) points of $\tilde{S}$ are isomorphic to $\mathbb{P}^1$. By \cite[12.1.6]{ega2}, we see that also the fibers over the non-closed points of $\tilde{S}$ are smooth rational curves; then \cite[6.8.3]{ega1} shows that $\tilde{E}$ is normal. We can now apply \cite[Theorem II.2.8]{kol_rat}, and conclude that $p$ is a holomorphic $\mathbb{P}^1$-bundle.

Notice that $\mu: \tilde{E} \to E_i$ is finite and birational; in particular $\tilde{E}$ is the normalization of $E_i$.
\medskip

For $j=1,2,3$, let $Z^j \subseteq \tilde{E}$ be an irreducible component of $\mu^{-1}(F_i^j)$ such that $\mu(Z^j)$ still dominates $S_i$. Consider the following pull-back diagram:
\[
\xymatrix{
G^1 \ar[r]^{h^1} \ar[d]_{\alpha^1} & \tilde{E} \ar[d]^{p}\\
Z^1 \ar[r]_{p} & \tilde{S},
}
\]
where, in the lower horizontal arrow, we still write $p$ for its restriction to $Z^1$.
Then, by the universal property of fiber product, there exists a section $s^1$ of $\alpha^1$ such that $h^1 \circ s^1$ is the inclusion of $Z^1$ in $\tilde{E}$. Consider now the restriction of $\alpha^1$ to an irreducible component of $(h^1)^{-1}(Z^2)$. It is still surjective; let us consider the base change of $\alpha^1$ given by this map and call $\alpha^2$ the resulting morphism. As above, call $s^2$ the natural section of $\alpha^2$ and call $(s^{1})^*$ the section pull-back of $s^1$; note that the images of $s^2$ and $(s^{1})^*$ are disjoint. Repeating this reasoning once again with $Z^3$ and composing with $\nu$, we get a finite map $T_i \rightarrow S_i$ giving a holomorphic $\mathbb{P}^1$-bundle $G^3 \to T_i$ which has three disjoint sections.

We can now apply Remark \ref{trivial} and conclude that $G^3 \to T_i$ is the trivial holomorphic $\mathbb{P}^1$-bundle over $T_i$, \textit{i.e.} there exists a biholomorphic map $G^3 \to T_i \times \mathbb{P}^1$ commuting with the projection map into $T_i$; moreover the images of the three disjoint sections through this biholomorphism are all of the type $\{pt\}\times T_i$. Being $G^3$ and $T_i$ projective varieties, this biholomorphic map is an isomorphism of algebraic varieties. 
\[
\xymatrix{
\mathbb{P}^1 \times T_i = G^3 \ar@/^1.5pc/[rrrr]^{h_i} \ar[r]^{\hspace{.7cm}h^3} \ar[d] & G^2  \ar[r]^{h^2} \ar[d]_{\alpha^2} & G^1 \ar[r]^{h^1} \ar[d]_{\alpha^1} & \tilde{E} \ar[r] \ar[d]^{\alpha} & E_i \ar[d]^{\varphi_i}\\
T_i = (h^1 \circ h^2)^{-1}(Z^3) \ar@/_/[u]_{s^3} \ar[r]^{\hspace{.7cm}\alpha^2} & (h^{1})^{-1}(Z^2) \ar@/_/[u]_{s^2} \ar[r]^{\hspace{.4cm}\alpha^1} & Z^1 \ar@/_/[u]_{s^1} \ar[r]^{p} & \tilde{S} \ar[r]^{\nu} & S_i.
}
\]
Let $(s^1)^{**}$ be the section of $\mathbb{P}^1 \times T_i$ obtained by pulling back $s_i^1$. By construction there is a finite morphism
\[
 h_{i|(s^1)^{**}(T_i)}: \{pt\} \times T_i \rightarrow \mu(Z^1) \subseteq F_i^1.
\]
Remember that there exists an index $l \in \{1, \ldots, s\}$ such that $F_i^1=E_0 \cap E_l$, so that
\[
(j_i \circ h_i)_*\N(\{pt\}\times T_i) \subseteq \N(F^1_i,X)=L.
\]
Furthermore, since $\mu(Z^1)$ dominates $S_i$, we have
\[
\dim (j_i \circ h_i)_*\N(\{pt\}\times T_i)= \dim\N(\mu(Z^1),X)\geq \dim\N(S_i,Y_i)=\rho_X-c_X-1.
\]
Hence, for dimensional reasons, $(j_i \circ h_i)_*\N(\{pt\}\times T_i)=L$ and (6) is finally proved.
\medskip

To prove (7), take an extremal ray $R$ of $X$ such that $E_i \cdot R<0$. Then $R \subseteq \overline{\NE}(E_i,X) \subseteq \NE(X)$, and hence $R$ is an extremal ray of $\overline{\NE}(E_i,X)$. From (6) we have:
\begin{equation}\label{incl}
\begin{array}{ll}
 \overline{\NE}(E_i,X) & = (j_i \circ h_i)_* \overline{\NE}(\mathbb{P}^1 \times T_i) = \\
 & = (j_i \circ h_i)_* \overline{\NE}(\{pt\} \times T_i) + (j_i \circ h_i)_*\overline{\NE}(\mathbb{P}^1 \times \{pt\}) \subseteq \\
 & \subseteq (L \cap \NE(X)) + R_i.
\end{array}
\end{equation}
Being $R$ extremal, it will be $R \subseteq L \cap \NE(X)$ or $R=R_i$. Since $L \subseteq E_i^{\perp}$ and $E_i \cdot R <0$, it must be $R=R_i$. The proposition is thus proved.
\end{proof}

\begin{rmk}\label{interi}
In the setting of Proposition \ref{prop}, for every $i=0,\ldots,s$, the general non-trivial fiber of $\varphi_i$ is contained in $X_{\reg}$. Thus the intersection products $E_i \cdot f_0$ and $E_0 \cdot f_i$ are integral numbers for every $i=1,\ldots,s$.
\end{rmk}

\section{Proof of Theorem \ref{teo_princ}}

Let us finally prove Theorem \ref{teo_princ}. The idea is the same as in the smooth case (\cite[Proposition 3.2.1]{cas10}). Nevertheless, for the reader's convenience, we write  here almost all of the details and we refer to the cited paper only for few results whose proofs' lack of knowledge does not affect the understanding of the rest of the proof.

The proof is quite articulated and will cover the whole section. Let us give a short outline. If $n=2$, the theorem is well-known; we may thus suppose $n \geq 3$. Let us notice, moreover, that the theorem is proved if we verify its statements under the assumption $c_X \geq 4$.

In the first part of the proof, we use the preliminary results of the second section in order to find a \textquotedblleft suitable \textquotedblright divisor to which we apply Proposition \ref{prop}. The Mori program we obtain allows us to define a contraction $\psi:X \to Y$, whose general fiber is a Del Pezzo surface with Picard number $c_X + 1$. Moreover this surface is smooth; the fundamental fact here is that $\dim(X_{\sing}) \leq n-3$. Thus $c_X \leq 8$ and the first part of the theorem is proved. What is left to show at this point is the existence of another contraction $\xi:X \to S$ giving rise to a finite morphism $\pi:=(\xi,\psi):X \to S \times Y$ as in the theorem. This construction will require some more work.

\begin{proof}[Proof of Theorem \ref{teo_princ}]
Let us suppose $n \geq 3$ and $c_X \geq 4$, so that all the assumptions of the theorem hold. If there exists a finite morphism $\pi:X \to S \times Y$ with $9 \geq \rho_S \geq c_X +1$, then $\rho_X - \rho_D \leq \codim \N(D,X) \leq c_X \leq 8$ for every prime divisor $D$. Thus it is enough to prove the second statement.

Let us notice that Proposition \ref{prop} implies at once the existence of an extremal ray $R_0$ of type $(n-1,n-2)^{eq}$ such that the target $Y_0$ of its contraction is Fano and, if $E_0:=\Locus(R_0)$, then $\N(E_0,X)=c_X$. In fact, it is enough to take one of the prime divisors obtained, as in Lemma \ref{teomp2}, from a special Mori program for $-D$ when $D$ is a divisor with $\codim\N(D,X)=c_X$. Let us fix such an extremal ray and consider a special Mori program for $-E_0$
\[
 X=X_0 \stackrel{\sigma_0}{\dashrightarrow} X_1 \dashrightarrow \cdots \dashrightarrow X_{k-1} \stackrel{\sigma_{k-1}}{\dashrightarrow}X_k \xrightarrow{\psi} Y;
\]
let $E_1,\ldots,E_s$ be the prime divisors it determines, in the sense of Lemma \ref{teomp2}.

As in the proof of Proposition \ref{prop}, we see that one of the following possibilities must hold: either $E_1 \cdot R_0 = \cdots = E_s \cdot R_0 =0$, or $E_i \cdot R_0 >0$ for every $i=1,\ldots,s$. By \cite[Lemma 3.2.10]{cas10}, we can always suppose to be in the second case (notice that, though such a result is stated in the smooth case, everything works also in our setting). Thus the assumptions of Proposition \ref{prop}(5) are verified and we can, at the occurrence, look at $E_0$ as one of the divisors determined by a special Mori program for $-E_i$. Hence all the claims of Proposition \ref{prop} hold if we interchange the roles of $E_0$ and $E_i$; in particular we get $L \subset E_0^{\perp}$.
\medskip

Let us consider the divisor $-K_X + E_1 + \cdots+ E_s$ on $X$. By Proposition \ref{prop}(7), for every extremal ray $R$ of $X$, $(-K_X + E_1 + \cdots + E_s) \cdot R \geq 0$. Moreover equality holds if and only if $R=R_i$ for some $i \in \{1,\ldots,s\}$. Thus $-K_X + E_1 + \cdots + E_s$ is a nef divisor and it defines a contraction $\sigma: X \rightarrow X_s$ such that ker$(\sigma_*)=\mathbb{R}R_1 + \cdots + \mathbb{R}R_s$ and $\Exc(\sigma)=E_1 \cup \cdots \cup E_s$. In particular $\dim(\ker \sigma_*)=s$. Notice that $\sigma$ verifies the assumptions of Theorem \ref{teo}, so that
\[
 -K_X + E_1 + \cdots + E_s = \sigma^{*}(-K_{X_s})
\]
and $X_s$ is Fano.

Set $D_0:=\sigma(E_0) \subset X_s$. Since $[f_i] \notin \N(E_0,X)$ (otherwise it would belong to $\N(E_0 \cap E_i,X) \subseteq E_i^{\perp}$ by Proposition \ref{prop}(1)), $\sigma_{|E_0}: E_0 \rightarrow D_0$ is a finite morphism. Hence $D_0 \subset X_s$ is a divisor and, by Proposition \ref{prop}(3),
\begin{equation}\label{eq}
 \N(D_0,X_s) = \sigma_*(L) + \mathbb{R}[\sigma(f_0)].
\end{equation}
Notice that $\sigma^{*}D_0=E_0 + \sum_{i=1}^s (E_0 \cdot f_i) E_i$. Thus, using the projection formula and recalling that the intersection products $E_i \cdot f_0$ and $E_0 \cdot f_i$ are all integers (Remark \ref{interi}), we see that $D_0 \cdot \sigma(f_0) > 0$ and $\sigma_*(L) \subseteq D_0^{\perp}$.

Factoring $\sigma$ as a sequence of $s$ divisorial contractions, we can view $\sigma:X \rightarrow X_s$ as a part of a special Mori program for $-E_0$ with $s$ steps such that at each step we have $Q_i \nsubseteq \N((E_0)_i,X_i)$. Recall that there are two possibilities: either $s=c_X$ and $\N(D_0,X)= \N(X_s)$, or $s=c_X - 1$ and $\codim\N(D_0,X_s)=1$.
\medskip

Let us now show that, up to replacing $s$ with $s+1$, we can assume that there exists an elementary contraction of fiber type $\varphi:X_s \rightarrow Y$ such that $D_0 \cdot \NE(\varphi)>0$.

Let $R$ be an extremal ray of $X_s$ such that $D_0 \cdot R >0$ and call $\varphi_R$ the contraction it defines. If $\varphi_R$ is of fiber type we are done. Suppose that it is birational; then it is enough to show that 
$R \nsubseteq \N(D_0,X_s)$. In fact, if this is true, we can view the contraction $\varphi_R:X_s \to X_{s+1}$ as a part of a special Mori program for $-E_0$ with $s+1$ steps. In particular it must be $s+1=c_X$ and $\N(\varphi_R (D_0),X_{s+1})=\N(X_{s+1})$ (see Lemma \ref{teomp2}). We can now replace $X_s$ with $X_{s+1}=X_{c_X}$; given now an extremal ray $R'$ of $X_{c_X}$ with $\varphi_R(D_0) \cdot R'>0$, it will necessarily be $R' \subseteq \N(\varphi_R (D_0),X_{c_X})$. Thus the above argument shows that the contraction $\varphi_{R'}$ defined by $R'$ cannot be birational anymore, and we are done.. 

In order to prove that $R \nsubseteq \N(D_0,X_s)$, let us first show that $R \nsubseteq \overline{\NE}(D_0,X_s)$. If, by contradiction, this is the case, then $R$ is a one-dimensional face of $\overline{\NE}(D_0,X_s)$. By \eqref{incl}, we have $\overline{\NE}(E_0,X) \subseteq R_0 + (L \cap \NE(X))$, and then
\[
 \overline{\NE}(D_0,X_s) \subseteq \sigma_*R_0 + (\sigma_*(L \cap \NE(X))).
\]
Hence $R = \sigma_*(R_0)$ because $\sigma_*(L) \subseteq D_0^{\perp}$. Then $D_0 \subseteq \Locus(R)$, which is impossible, because $D_0 \cdot R >0$. Thus $R \nsubseteq \overline{\NE}(D_0,X_s)$ and $\varphi_R$ is finite on $D_0$; in particular the fibers of $\varphi_R$ are at most one-dimensional. For $i=1,\ldots,s$, set $G_i:=\sigma(E_i) \subset X_s$; then $\dim(G_i)=n-2$, $\N(G_i,X_s)=\sigma_*(L)$ and $G_i \subset D_0$. Let $C$ be an irreducible component of a one-dimensional fiber of $\varphi_R$. Then $C$ cannot be contained in $G_1 \cup \cdots \cup G_s \subset D_0$, and hence it intersects the open subset over which $X_s$ is isomorphic to $X$, which is Gorenstein. Applying now Lemma \ref{ishii}, we see that $-K_X \cdot C\leq 1$. By Lemma \ref{ant_deg}:
\[
 1 \leq -K_X \cdot \tilde{C} \leq -K_X \cdot C \leq 1, 
\]
where $\tilde{C} \subset X$ is the strict transform of $C$. Then $-K_X \cdot \tilde{C} = -K_X \cdot C = 1$ and $C \cap G_1 \cap \cdots \cap G_s = \emptyset$. Thus the exceptional locus of $\varphi_R$ is contained in the Gorenstein locus of $X_s$; using now Theorem \ref{teo}(2), we see that $\varphi_R$ is divisorial of type $(n-1,n-2)^{eq}$. Let $E_R$ be its exceptional divisor; the above argument shows that $\N(E_R,X_{s}) \subseteq (G_1)^{\perp} \cap \cdots \cap (G_s)^{\perp}$. In particular $\sigma_*(L)=\N(G_1,X_s)\subseteq E_R^{\perp}$.

Let us now show that $R \nsubseteq \N(D_0,X_s)$. Otherwise, by \eqref{eq}, if $C$ is an irreducible curve with class in $R$ as above, it would be $C\equiv\lambda\sigma(f_0) + \sigma_*(\eta)$ with $\eta \in L$. Recalling that $\sigma_*(L)\subseteq (D_0)^{\perp}$, we get $0 < D_0 \cdot C = \lambda D_0 \cdot \sigma(f_0)$; then $\lambda >0$ because $D_0 \cdot \sigma(f_0)>0$. But $E_R \neq D_0$, hence
\[
 0>E_R \cdot C =\lambda E_R \cdot \sigma(f_0) \geq 0,
\]
and we get a contradiction.

\medskip

Let $\varphi:X_s \rightarrow Y$ be the contraction of fiber type whose existence we have just proved. Then $D_0 \cdot \NE(\varphi)>0$ and, if we set $\psi=\varphi \circ \sigma:X \rightarrow Y$, we have $\psi(E_0)=Y$.
\[
 \xymatrix{
X \ar[r]_{\sigma} \ar@/^1pc/[rr]^{\psi} & X_s \ar[r]_{\varphi} & Y.
}
\]
Since $\N(G_i,X_s)=\sigma_*(L) \subseteq D_0^{\perp}$, $\varphi$ must be finite on $G_i$, so that $\dim(Y)\geq n-2$. Moreover $\rho_Y=\rho_X-s-1$.

\medskip
\noindent\textbf{First case: $\varphi$ is not finite on $D_0$.} In this case $\NE(\varphi) \subseteq \N(D_0,X_s)$, hence $s=c_X$ by Theorem \ref{teomp}(4).

Simple computations show that $E_0,\ldots,E_{c_X}$ and $\mathbb{R}[f_0],\ldots,\mathbb{R}[f_{c_X}]$ are linearly indipendent in $\mathcal{N}^1(X)$ and $\N(X)$ respectively. In particular, recalling that $L \subseteq E_0^{\perp} \cap E_1^{\perp} \cap \cdots \cap E_{c_X}^{\perp}$ and that $\codim(L)=c_X+1$, we have
\[
L = E_0^{\perp} \cap E_1^{\perp} \cap \cdots \cap E_{c_X}^{\perp}.
\]

Since $R=\NE(\varphi)$ is a one-dimensional face of $\overline{\NE}(D_0,X_s)\subseteq\sigma_*R_0 + (\sigma_*(L \cap \NE(X)))$, it must be $R=\sigma_*(R_0)$ (recall that $\sigma_*(L) \subseteq D_0^{\perp}$). In particular $\dim(Y)= \dim(\varphi(D_0)) =n-2$.

Let $F$ be the general fiber of $\psi$. By construction $\N(F,X)=\mathbb{R}[f_0] + \cdots + \mathbb{R}[f_{c_X}]$. Moreover $F \subset X_{\reg}$ because $\dim(X_{sing}) \leq n-3$. Then $F$ is a smooth Del Pezzo surface and
\[
9 \geq \rho_F \geq \dim(\N(F,X))=c_X+1.                                                                                                                                                                                                                                                                                                                                                                                               \]
Then $c_X \leq 8$ and the first statement of Theorem \ref{teo_princ} is proved. 

Let us now construct the finite morphism $\pi$. Let us consider the divisor
\[
 M:=2E_0 + \sum_{i=1}^{c_X}E_i
\]
on $X$ and let us verify that it is nef. If $C \subset \Supp(M)$ is an irreducible curve, than $C \subset E_j$ for some $j \in \{1,\ldots,c_X\}$. By \eqref{incl}, $[C] \in L + R_j$. Since $L = E_0^{\perp} \cap E_1^{\perp} \cap \cdots \cap E_{c_X}^{\perp} \subseteq M^{\perp}$, it is enough to compute the intersection products $M \cdot f_j$. By Lemma \ref{teomp2}:
\[
M \cdot f_0 = -2 + \sum ^{c_X}_{i=1} E_i \cdot f_0 \ \ \textmd{ and }\ \ M \cdot f_i = 2 E_0 \cdot f_i -1  \ \ \textmd{for $i=1,\ldots,c_X$}.
\]
Recall that $E_i \cdot f_0$ and $E_0 \cdot f_i$ are all positive numbers and that, by Remark \ref{interi}, they are integral. Moreover $c_X\geq 4$, so that all the above intersection products are positive. Hence $M$ is a nef divisor and it defines a contraction $\xi:X \rightarrow S$ such that $\NE(\xi)=M^{\perp} \cap \NE(X)$.
\[
 \xymatrix{ 
& X \ar[r]^{\sigma} \ar[rd]^{\psi} \ar[dl]_{\xi}& X_{c_X} \ar[d]^{\varphi}\\
S & & Y
}
\]
\medskip

For every $i=1,\ldots,c_X$, let $h_i:\mathbb{P}^1 \times T_i \rightarrow E_i$ be the finite morphism given by Proposition \ref{prop}(6) and let $\mathbb{P}^1 \times T_i \xrightarrow{\gamma_i} Z_i \xrightarrow{\delta_i} \xi(E_i)$ be the Stein factorization of $(\xi_{|E_i}) \circ h_i$, so that $\gamma_i$ has connected fibers and $\delta_i$ is finite. Since $(j_i \circ h_i)_*\N(\{pt\} \times T_i)=L=E_0^{\perp} \cap \cdots \cap E_{c_X}^{\perp} \subseteq \ker(\xi_*)$, $\gamma_i(\{pt\} \times T_i)$ is contracted to a point by $\gamma_i$. Then $\gamma_i$ factors through the projection $\mathbb{P}^1 \times T_i \rightarrow \mathbb{P}^1$:
\[
 \xymatrix{
\mathbb{P}^1 \ar[r] & Z_i \ar[rd]^{\delta_i} & \\
\mathbb{P}^1 \times T_i \ar[u] \ar[ur]^{\gamma_i} \ar[r]^{\hspace{.35cm}h_i}& E_i \ar[r]^{\hspace{-.3cm}\xi_{|E_i}} & \xi(E_i). 
}
\]
In particular $\xi(E_i)=\xi(f_i)$ is an irreducible curve because $M \cdot f_i>0$.

Let us show that $\NE(\xi)=L \cap \NE(X)$. One inclusion is obvious because $\NE(\xi) = M^{\perp} \cap \NE(X) \supseteq E_0^{\perp} \cap \cdots \cap E_{c_X}^{\perp} \cap \NE(X)=L \cap \NE(X)$. Conversely, let $C \subset X$ be an irreducible curve such that $M \cdot C=0$. If $C$ does not intersect the support of $M$, then $[C] \in E_0^{\perp} \cap \cdots \cap E_{c_X}^{\perp} = L$; otherwise, $C$ must be contained in the support of $M$, hence $C \subset E_i$ for some $i=0, \ldots,c_X$. Then $C = h_i(\tilde{C})$ for some $\tilde{C} \subset \mathbb{P}^1 \times T_i$ which is contracted by $\xi_{|E_i} \circ h_i$.
Then $[C] \in (j_i \circ h_i)_* \N (\{pt\} \times T_i) = L$ by Proposition \ref{prop}(6).

Since ker$(\xi)_* \subseteq L \subseteq (E_i)^{\perp}$, by \cite[Theorem 3.7(4)]{km}, there exists a $\mathbb{Q}$-Cartier divisor $D_i$ on $S$ such that Supp$(D_i)=\Supp(\xi(E_i))$, which is one-dimensional. Hence $S$ has dimension $2$.
\medskip

Consider the morphism $\pi := (\xi,\psi): X \rightarrow S \times Y$; notice that it is finite because ker$(\psi_*) = \mathbb{R}R_0 + \cdots + \mathbb{R}R_{c_X}$, $\ker(\xi_*) \subseteq L$ and $L \cap \mathbb{R}[f_0] + \cdots + \mathbb{R}[f_{c_X}] = \{0\}$, as an easy computation shows. Moreover
\[
 \rho_X - \rho_S=\dim(\ker{\xi_*}) \leq \dim(L)=\rho_X-c_X-1;
\]
on the other hand, since $X$ dominates $S \times Y$, we get $\rho_X \geq \rho_S+\rho_Y$, from which
\[
 \rho_S \leq \rho_X - \rho_Y = c_X + 1.
\]
Hence $\rho_S=c_X+1=\rho_X-\rho_Y$. Moreover $\rho_S \leq 9$ because $S$ is dominated by the general fiber $F$ of $\psi$, which is a smooth Del Pezzo surface.

Consider the finite morphism $\xi_{|F}:F \to S$. Since $F$ is smooth, by \cite[Proposition 5.13 and Lemma 5.16]{km}, we see that $S$ has rational $\mathbb{Q}$-factorial singularities. Moreover $\xi_{|F}$ is \textquoteleft non-degenerate\textquoteright \ in the sense of \cite[Definition 1.14]{fz} and the singularities of $S$ are isolated. We can thus apply \cite[Corollary 1.27 and Note 1.26]{fz} and we see that $S$ has log-terminal singularities. By \cite[Proposition 4.18]{km}, this means that, locally around every singular point, $S$ is a quotient of $\mathbb{C}^2$ by the action of a finite group.

Finally, \cite[Corollary 7.4]{k86} shows that also $Y$ has rational singularities.

\medskip
\noindent\textbf{Second case: $\varphi$ is finite on $D_0$.} In this case $\dim(Y)=n-1$ and the fibers of $\varphi$ are all one-dimensional. By \cite[Corollary 1.9 and Theorem 4.1(2)]{AW}, the general fiber $F$ of $\varphi$ is isomorphic to $\mathbb{P}^1$, $-K_{X_s} \cdot F =2$ and, if $C$ is an irreducible component of a fiber, then its reduced structure is isomorphic to $\mathbb{P}^1$. Recall that, by Lemma \ref{ant_deg}, $-K_{X_s} \cdot C \geq 1$; therefore the arbitrary fiber $F$ of $\varphi$ can be of two types:

\begin{itemize}
\item  $F$ is irreducible without multiple components such that $F_{\red} \simeq \mathbb{P}^1$ and $-K_{X_s} \cdot F =2$;
\item $F = C \cup C'$ with $C$ and $C'$ (eventually coincident) irreducible curves without multiple components such that $C_{\red} \simeq C'_{\red} \simeq \mathbb{P}^1$ and $-K_{X_s} \cdot C = -K_{X_s} \cdot C' =1$.
\end{itemize}

Let us call a \textit{generalized conic bundle} every morphism whose fibers are all as above. Notice that the main difference from smooth conic bundles is that the fibers here are allowed to have embedded points; equivalently, $\varphi$ does not need to be flat.
\medskip

Write $\sigma$ as a composition of $s$ divisorial contractions of type $(n-1,n-2)^{eq}$
\[
\xymatrix{
 X=X_0 \ar@/^1pc/[rrrr]^{\sigma} \ar[r] & X_1 \ar[r] & \cdots \ar[r] & X_{s-1} \ar[r] & X_s;
}
\]
then for every $i=0,\ldots,s-1$, the composition $X_i \rightarrow X_{i+1} \rightarrow \cdots \rightarrow X_{s} \stackrel{\varphi}{\rightarrow} Y$ is also a generalized conic bundle, in particular for every curve $C$ contained in a fiber the intersection $-K_{X_i} \cdot C$ is integral. According to Lemma \ref{ant_deg}, this imply, as in the smooth case (see \cite[proof of Lemma 2.8]{cas10}), that $H_1:=\psi(E_1),\ldots,H_s:=\psi(E_s) \subset Y$ are pairwise disjoint. The situation now is exactely the same as in \cite[3.2.24]{cas10}. In particular we can see that $s=c_X-1$ and that there exist extremal rays $\hat{R}_1=\mathbb{R}_{\geq0}[\hat{f}_1],\ldots,\hat{R}_{c_X-1}=\mathbb{R}_{\geq0}[\hat{f}_{c_X-1}]$ in $\NE(X)$ such that:
\begin{itemize}
 \item $\mathbb{R}[\hat{f}_1],\ldots,\mathbb{R}[\hat{f}_{c_X-1}]$ are linearly indipendent in $\N(X)$;
 \item $NE(\psi)=R_1+\cdots+R_{c_X-1}+\hat{R}_1+\cdots+\hat{R}_{c_X-1}$;
 \item for $i=1,\ldots,c_X-1$, if we set $\hat{E}_i=\Locus(\hat{R_i})$, then $E_1 \cup \hat{E}_1, \ldots, E_{c_X-1} \cup \hat{E}_{c_X-1}$ are pairwise disjoint and $\psi^{*}(H_i)=E_i + \hat{E}_i$;
 \item $\mathbb{R}[f_1],\ldots,\mathbb{R}[f_{c_X-1}],\mathbb{R}[\hat{f}_1]$ are linearly indipendent in $\N(X)$ and the space they generate contains $\mathbb{R}[\hat{f}_{2}],\ldots,\mathbb{R}[\hat{f}_{c_X-1}]$;
 \item for every $i=1,\ldots,c_X-1$, there exist finite morphisms $\hat{T}_i \to \hat{\varphi}_i(\hat{E}_i)$ and $\hat{h}_i:\mathbb{P}^1 \times \hat{T}_i$ making the following diagram commute:
\[
\xymatrix{
\mathbb{P}^1 \times \hat{T}_i \ar[r]^{\hspace{.3cm}\hat{h}_i} \ar[d] & \hat{E}_i \ar[d]^{\hat{\varphi}_{i|\hat{E}_i}}\\
\hat{T}_i \ar[r] & \hat{\varphi}_i(\hat{E}_i)
}
\]
where $\hat{T}_i$ is an $(n-2)$-dimensional variety and $\mathbb{P}^1 \times \hat{T}_i \rightarrow \hat{T}_i$ is the trivial $\mathbb{P}^1$-bundle. Furthermore $(\hat{j}_i \circ h_i)_*\N(\{pt\} \times T_i) =L$, where $\hat{j}_i$ is the inclusion of $\hat{E}_i$ in $X$. 
\end{itemize}

We need the following result, whose proof is the same of \cite[Lemma 3.2.25]{cas10}.

\begin{lem} \label{ciao}
 Let $E$ be a projective variety and $\pi:E \rightarrow W$ a $\mathbb{P}^1$-bundle with fiber $f \subset E$. Let $\psi_0:E \rightarrow Y$ be a morphism onto a normal, projective and $\mathbb{Q}$-factorial variety such that $\dim (\psi_0(f))=1$. Let $H \subset Y$ be a prime divisor such that $\N(H,Y) \subsetneq \N(Y)$ and $\psi_0^{*}(H) \cdot f > 0$.

Then there exists an elementary contraction $\zeta: Y \rightarrow Y'$, with one-dimensional fibers, which makes the following diagram commute: 
\[
\xymatrix{
E \ar[r]^{\psi_0} \ar[d]_{\pi} & Y \ar[d]^{\zeta}\\
W \ar[r] & Y'
}
\]
\end{lem}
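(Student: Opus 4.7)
The strategy is to identify $\zeta$ as the contraction of the ray $R := \mathbb{R}_{\geq 0}[\psi_0(f)] \subset \overline{\NE}(Y)$ and then to verify the diagram commutes by the rigidity lemma.

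First I would collect the basic numerical data. Since $\pi: E \to W$ is a $\mathbb{P}^1$-bundle, any two fibers $f_w$, $f_{w'}$ of $\pi$ are numerically equivalent in $E$, so their images $\psi_0(f_w)$ are numerically equivalent in $Y$. By the projection formula, $\psi_0^*(H) \cdot f = H \cdot \psi_{0*}(f) > 0$, so the restriction $\psi_0|_f$ is finite (hence $\psi_0(f)$ is an irreducible rational curve not contained in $H$) and $[\psi_0(f)] \neq 0$ in $\N(Y)$.

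Next I would show that $R$ is an extremal ray of $\overline{\NE}(Y)$ and that its contraction $\zeta: Y \to Y'$ exists. This is the crux of the argument: the hypothesis $\N(H,Y) \subsetneq \N(Y)$ should be exploited to produce a nef $\mathbb{Q}$-Cartier divisor $L$ on the $\mathbb{Q}$-factorial variety $Y$ such that $L^{\perp} \cap \overline{\NE}(Y) = R$, by combining $H$ with suitable classes arising from the $\mathbb{P}^1$-bundle structure of $\pi$ and from $\psi_{0*}$. Once $L$ is in hand, semi-ampleness (or a contraction-theorem argument) produces the morphism $\zeta: Y \to Y'$ contracting precisely the curves whose class lies on $R$.

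Once $\zeta$ exists, every fiber of $\zeta$ contains a curve $\psi_0(f_w)$, so fiber dimensions are at least $1$; the upper bound $\leq 1$ follows because $\zeta$ is elementary (only classes proportional to $[\psi_0(f)]$ are contracted) and $\psi_0^{-1}(\Exc(\zeta))$ is swept out by the one-dimensional fibers $f_w$. For the commutativity, $\zeta \circ \psi_0: E \to Y'$ is constant on each fiber $f_w$ of $\pi$; since $\pi_* \mathcal{O}_E = \mathcal{O}_W$, the rigidity lemma yields a unique morphism $g: W \to Y'$ with $\zeta \circ \psi_0 = g \circ \pi$, which is the map completing the square.

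\textbf{Main obstacle.} I expect the principal difficulty to lie in the construction of the nef supporting divisor $L$, i.e.\ in proving extremality and contractibility of $R$ without the luxury of a $K_Y$-negativity assumption on $Y$. The hypothesis $\N(H,Y) \subsetneq \N(Y)$ is exactly the linear-algebraic data that must be leveraged to produce a supporting hyperplane for $R$; transferring the smooth-case argument of \cite[Lemma 3.2.25]{cas10} to our mildly singular setting, while keeping track of $\mathbb{Q}$-Cartier-ness, will be the most delicate step.
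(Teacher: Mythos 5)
First, be aware that the paper offers no proof of this lemma at all: it simply states that the proof is the same as that of \cite[Lemma 3.2.25]{cas10}, so the only comparison available is with that argument. Your preliminary steps are fine: the images $C_w:=\psi_0(\pi^{-1}(w))$ are numerically proportional curves covering $Y$, each meeting $H$ because $\psi_0^*(H)\cdot f>0$, and the commutativity of the square, once $\zeta$ exists, does follow from rigidity. The problem is that the entire content of the lemma is the existence of the elementary contraction $\zeta$, and this is exactly the step you leave open. Your proposed route --- produce a nef $\mathbb{Q}$-Cartier divisor $L$ with $L^{\perp}\cap\overline{\NE}(Y)$ equal to the ray $R:=\mathbb{R}_{\geq 0}[\psi_0(f)]$, then contract --- is neither carried out nor viable in the stated generality: (i) no candidate for $L$ is exhibited ($H$ itself is \emph{positive} on $R$, and writing down a divisor vanishing exactly on $R$ presupposes that $R$ is extremal with a rational supporting hyperplane, which is part of what must be proved); (ii) $Y$ is only assumed normal, projective and $\mathbb{Q}$-factorial, so no cone or contraction theorem applies and a nef divisor need not be semi-ample --- ``semi-ampleness produces the morphism'' is unjustified without extra hypotheses on $Y$ that the lemma does not make.

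The argument of \cite[Lemma 3.2.25]{cas10} is necessarily geometric rather than divisor-theoretic, precisely because $Y$ carries no MMP-type hypotheses. From the covering family $\{C_w\}$ of proportional curves each meeting $H$ one gets $\N(Y)=\N(H,Y)+\mathbb{R}[\psi_0(f)]$ (the standard covering-family relation, \cite[Lemma 3.2]{occ06}, already invoked in the proof of Proposition \ref{prop}); since $\N(H,Y)\subsetneq\N(Y)$, it follows that $\N(H,Y)$ is a hyperplane and $[\psi_0(f)]\notin\N(H,Y)$. This last fact is the key: any connected union of curves of the family must be one-dimensional, since a two-dimensional one would meet the $\mathbb{Q}$-Cartier divisor $H$ in a curve whose class would lie in $\N(H,Y)\cap\mathbb{R}[\psi_0(f)]=\{0\}$, a contradiction. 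One then shows directly that the $C_w$ are the fibres of a morphism $\zeta:Y\to Y'$, which is finite on $H$ and satisfies $\rho_Y-\rho_{Y'}=1$ by the displayed decomposition of $\N(Y)$. Note that the same dimension count is what is needed to repair your argument that the fibres of $\zeta$ are one-dimensional: as written (``because $\zeta$ is elementary'') it is insufficient, since an elementary contraction can perfectly well have higher-dimensional fibres.
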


We are going to apply Lemma \ref{ciao} to the trivial $\mathbb{P}^1$-bundle $\mathbb{P}^1 \times T_0 \xrightarrow{q_0} T_0$, with $\psi_0:=\psi_{|E_0} \circ h_0: \mathbb{P}^1 \times T_0 \rightarrow Y$ and $H:=H_1=\psi(E_1)=\psi(E_0 \cap E_1)$. In fact, if $g_0$ is a fiber of $q_0$, then $(h_{0})_*([g_0])=r[f_0]$ for some positive integer $r$, and thus:
\[
 \psi_0^*(H_1)\cdot g_0 = h_0^* \circ (\psi_{|E_0})^*(H_1) \cdot g_0 = r \psi^*(H_1) \cdot f_0 = r (E_1 + \hat{E}_1) \cdot [f_0] > 0.
\]
Moreover $\N(H_1,Y) \subseteq H_2^{\perp} \subsetneq \N(Y)$; Lemma \ref{ciao} can thus be applied and we find an $(n-2)$-dimensional variety $Y'$ and a morphism $\zeta:Y \rightarrow Y'$ which is the contraction of $\mathbb{R}_{\geq 0}[\psi_0(g_0)]=\mathbb{R}_{\geq 0}[\psi(f_0)]$. Define $\psi':=\zeta \circ \varphi \circ \sigma: X \rightarrow Y'$. Then $\NE(\psi')=R_0 + R_1 + \cdots + R_{c_X-1} + \hat{R}_1 + \cdots + \hat{R}_{c_X-1}$ and $\rho_{Y'}=\rho_X - c_X - 1$. 

Consider the divisor
\[
 M'=2 E_0 + 2\sum_{i=1}^{c_X-1}E_i + \sum_{i=1}^{c_X-1}\hat{E}_i.
\]
Similarly as for the divisor $M$ of the first case, it is easy to verify that $M'$ is nef; hence it defines a contraction $\xi':X \rightarrow S'$. Exactly as in the first case, thanks to the existence of the finite morphisms $h_i$ and $\hat{h}_i$ and their properties, we see that
\[
\NE(\xi')=L \cap \NE(X).
\]
and that $\dim(S')=2.$

Define the map $\pi:=(\xi',\psi'):X \rightarrow S' \times Y'$. After checking that $L \cap \mathbb{R}[f_0]+\mathbb{R}[f_1]+ \cdots +\mathbb{R}[f_{c_X-1}]+\mathbb{R}[\hat{f}_1]+\cdots + \mathbb{R}[\hat{f}_{c_X-1}] =\{0\}$, we see that $\pi$ is finite. The thesis now follows exactly as in the first case.
\end{proof}

\begin{rmk}\label{3-4}
Let $\xi:X \to S$ be the first component of the morphism $\pi$.

If $n=3$, the general fiber of $\xi$ is a smooth Fano variety of dimension one, \textit{i.e.} it is isomorphic to $\mathbb{P}^1$. Since it dominates $Y$, which is normal, we conclude that $Y \simeq \mathbb{P}^1$.

If, instead, $n=4$, the general fiber of $\xi$ is a smooth Del Pezzo surface. Since it dominates $Y$, we see that $\rho_Y \leq 9$. Moreover we can repeat the reasoning we did for $S$ and conclude that $Y$ has log-terminal singularities.
\end{rmk}

\section{Complements}

The following remark, together with Theorem \ref{teo_princ}, implies Theorem \ref{dim3}.

\begin{rmk}
 Let $X$ be a three-dimensional $\mathbb{Q}$-factorial Gorenstein Fano variety whose singularities are canonical and isolated. Then there exists a prime divisor $D \subset X$ such that $\dim \N(D,X) \leq 2$.
\end{rmk}

\begin{proof}
Let $R_1, \ldots, R_m$ be the extremal rays of $\NE(X)$. Assume $m > 2$ (and hence $\rho_X >2$), the statement being clear otherwise. For every $i=1,\ldots,m$, call $\varphi_i:X \to Y_i$ the contraction of the ray $R_i$.

Suppose that, for some $i$, the contraction $\varphi_i$ is birational; then, by Theorem \ref{teo}, it is divisorial. Let $E$ be its exceptional divisor and $f \subset X$ a one-cycle such that $\mathbb{R}_{\geq0}[f]=R_i$. If $\dim(\varphi_i(E_i))=0$, then $\N(E,X)= \mathbb{R}[f]$ and $\dim \N(E,X)=1$. Otherwise, $\varphi_i(E)=C$ is an irreducible curve, $(\varphi_i)_*\N(E,X)\simeq\N(C,Y_i)$ and $\dim \N(E,X)=2$.

We can thus suppose that each $\varphi_i$ is of fiber type. Since we are assuming $\rho_X>2$, $\dim(Y_i)=2$ for every $i=1,\ldots,m$; moreover, the contraction of any two-dimensional face of $\NE(X)$ leads to a one-dimensional variety. Then a general fiber of such contractions is a prime divisor $F$ such that $\dim\N(F,X)=2$.
\end{proof}

\begin{rmk}
 In the setting of Theorem \ref{dim3}, when $\rho_X \geq 6$, we have $c_X\geq 4$. Then, by Theorem \ref{teo_princ}, there exists a finite morphism $\pi:X \to S \times \mathbb{P}^1$, where $S$ is a normal surface with rational quotient singularities and $\rho_S=\rho_X-1 \leq 9$.
\end{rmk}

In dimension $4$, the following result follows directly from Theorem \ref{teo_princ} and Remark \ref{3-4}.

\begin{cor}\label{dim4}
Let $X$ be a four-dimensional $\mathbb{Q}$-factorial Gorenstein Fano variety with canonical singularities and such that the closed set of non-terminal singularities is finite. Suppose moreover that there exists a prime divisor $D \subset X$ such that $\codim \N(D,X)\geq 4$; then $\rho_X \leq 18$. 
\end{cor}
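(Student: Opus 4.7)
The plan is to apply Theorem \ref{teo_princ} directly, using $n=4$, together with the dimension-four observation recorded in Remark \ref{3-4}. Since $X$ is four-dimensional, $\mathbb{Q}$-factorial Gorenstein Fano with canonical singularities and only finitely many non-terminal points, and since we are given a prime divisor $D \subset X$ with $\codim \N(D,X) \geq 4$, the hypotheses of Theorem \ref{teo_princ} are satisfied.

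Thus Theorem \ref{teo_princ} produces a finite morphism
\[
\pi : X \longrightarrow S \times Y,
\]
where $S$ is a normal surface with rational quotient singularities, $Y$ is a normal variety of dimension $n-2 = 2$ with rational singularities, $9 \geq \rho_S \geq \codim \N(D,X) + 1$, and
\[
\rho_X = \rho_S + \rho_Y.
\]
So the remaining task is to bound $\rho_Y$, and here the surface $Y$ is what requires the extra input. This is exactly the content of Remark \ref{3-4} in the case $n=4$: the first component $\xi : X \to S$ of $\pi$ has general fiber a smooth Del Pezzo surface, which therefore dominates $Y$; hence $\rho_Y \leq 9$ (and $Y$ in fact has log-terminal singularities, though only the Picard-number bound is needed here).

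Combining the two inequalities $\rho_S \leq 9$ and $\rho_Y \leq 9$ with the identity $\rho_X = \rho_S + \rho_Y$ yields $\rho_X \leq 18$, as required. There is no real obstacle: the corollary is essentially an immediate assembly of Theorem \ref{teo_princ} and Remark \ref{3-4}, and the only subtlety worth flagging is the appeal to Remark \ref{3-4} to ensure that $Y$ (and not merely $S$) has Picard number bounded by $9$, since Theorem \ref{teo_princ} alone does not give any a priori bound on $\rho_Y$.
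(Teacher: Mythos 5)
Your proposal is correct and follows exactly the paper's route: the author states that Corollary \ref{dim4} ``follows directly from Theorem \ref{teo_princ} and Remark \ref{3-4}'', i.e.\ combining $\rho_X=\rho_S+\rho_Y$ and $\rho_S\leq 9$ from the theorem with the bound $\rho_Y\leq 9$ obtained in Remark \ref{3-4} from the fact that the general fiber of $\xi$ is a smooth Del Pezzo surface dominating $Y$. Your write-up simply makes this assembly explicit.
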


In dimensions $3$ and $4$, if $X$ is not smooth, then $\pi$ cannot be an isomorphism, as the following remark shows.

\begin{rmk}
Let $X$ be a Fano variety of dimension $3$ or $4$ satisfying \eqref{X} and such that there exists a prime divisor $D \subset X$ with $\codim \N(D,X)\geq 4$. Let $\pi$ be the finite morphism of Theorem \ref{teo_princ}. Then, if $\pi$ is an isomorphism, $X$ is smooth. Indeed, suppose $X \simeq S \times Y$. If $Y$ has a singular point $y$, then $S \times \{y\} \subseteq X_{\sing}$ and $\dim(S\times \{y\})=2$, which is impossible. Similarly, if $s$ is a singular point of $S$, then $\{s\} \times Y \subseteq X_{\sing}$ and $\dim(\{s\} \times Y)=\dim(Y)=\dim(X) -2$.  
\end{rmk}

The following remark is a generalization to the singular case of \cite[Proposition 5]{tsu}.

\begin{rmk}
Let $X$ be a Fano variety satisfying \eqref{X}. If $n \geq 3$ and $c_X=\rho_X-1$, then $\rho_X\leq 3$. Indeed, let $D \subset X$ be a prime divisor with $\dim\N(D,X)=1$ and let $E_1,\ldots,E_s$ the divisors obtained running a Mori program for $-D$, as in Lemma \ref{teomp2}. Recall that $E_i \cap E_j = \emptyset$ and $E_i \cap D \neq \emptyset$ for every $i,j \in \{1,\ldots,s\}$ with $i \neq j$. Suppose $s \geq 2$. Since $n \geq 3$, we can find a curve $C_1 \subseteq E_1 \cap D$; let now $C_2 \subset D$ be a curve intersecting $E_2$ such that $C_2 \nsubseteq E_2$. Then $E_2 \cdot C_1 = 0$ and $E_2 \cdot C_2 >0$. But this is impossible because, by assumption, $C_1$ and $C_2$ are numerically proportional. Hence $s \leq 1$ and $c_X=\codim \N(D,X) \leq 2$.
\end{rmk}

We conclude this paper with an example which shows that some of the assumptions of Theorem \ref{teo_princ} cannot be omitted. We show that there exists a singular Del Pezzo surface not satisfying some of the assumptions of the theorem, for which the main statement does not hold. More precisely, this surface has log-terminal non-Gorenstein singularities and its Picard number is $10$. In general, for Del Pezzo surfaces with log-terminal singularities, the Picard number is bounded by a constant which depends only on the index (\cite[Theorem 0.1]{nik2}). When the index is one, we can take this constant to be $9$ (\cite[Proposition 3.2]{nik1}).

\begin{ex}\label{esempio} This example was found using the classification of toric log Del Pezzo surfaces of index at most $16$ in \cite{kkn}; the list of such surfaces is available in the Graded Ring Database \cite{bro}.
Let us consider in $\mathbb{R}^2$ the fan $\Sigma$ whose rays are generated by the following vectors:
\[
(-2,3), (-1,3), (1,2), (2,1), (3,-1), (3,-2), (2,-3), (1,-3), (-1,-2), (-2,-1), (-3,1), (-3,2).
\]
The toric surface $S$ defined by $\Sigma$ is a ($\mathbb{Q}$-factorial) Del Pezzo surface with log-terminal singularities (see, for example, \cite[Remark 6.7]{dais}); moreover one can check that its index is $15$. We have
\[
\rho_S=(\textmd{number of rays of $\Sigma$}) - 2=10.
 \]
Since every prime divisor of $D \subset S$ is a curve, we get $\codim\N(D,S)=\rho_S-1=9$.
\end{ex}

\providecommand{\bysame}{\leavevmode\hbox to3em{\hrulefill}\thinspace}
\providecommand{\MR}{\relax\ifhmode\unskip\space\fi MR }
\providecommand{\MRhref}[2]{%
  \href{http://www.ams.org/mathscinet-getitem?mr=#1}{#2}
}
\providecommand{\href}[2]{#2}

\end{document}